
\documentclass[a4paper, 11pt]{amsart}

\usepackage{amsmath,amssymb,amsthm,amscd,enumerate,setspace}
\usepackage[all]{xy}
\usepackage[dvips]{graphicx}

\usepackage{xcolor}
\usepackage{mdframed}
\newmdenv[backgroundcolor=yellow]{shaded}
\usepackage{hyperref}
\hypersetup{colorlinks=true, linkcolor=red, citecolor=blue}

\input xypic
\xyoption{all}
\baselineskip=55pt
\textwidth=6.2in
\hoffset=-.6in
\voffset=-.5in
\textheight=8.7in

\newcommand{\llar}{-\kern-5pt-\kern-5pt\longrightarrow}
\newcommand{\lllar}{-\kern-5pt-\kern-5pt\llar}

\newtheorem{Theorem}{Theorem}[section]
\newtheorem{Lemma}[Theorem]{Lemma}

\newtheorem{Proposition}[Theorem]{Proposition}

\newtheorem{Conjecture}[Theorem]{Conjecture}

\newtheorem{Remark}[Theorem]{Remark}
\newtheorem{Example}[Theorem]{Example}
\newtheorem{Definition}[Theorem]{Definition}
\newtheorem{Question}[Theorem]{Question}
\newtheorem{Questions}[Theorem]{Questions}

\newtheorem{Sticky Points}[Theorem]{Sticky Points}

\def\bl{[\![}
\def\br{]\!]}
\def\cdeg{\mbox{\rm cdeg}}

\def\coker{\mbox{\rm coker}}
\def\deg{\mbox{\rm deg}}

\def\ddeg{\mbox{\rm bideg}}

\def\ds{\displaystyle}
\def\e{\mathrm{e}}

\def\Ext{\mbox{\rm Ext}}

\def\h{\mbox{\rm ht}}
\def\Hom{\mbox{\rm Hom}}

\def\ker{\mbox{\rm ker}}
\def\lar{\longrightarrow}
\def\l{{\lambda}}
\def\m{\mathfrak{m}}
\def\n{\mathfrak{n}}
\def\p{{\mathfrak p}}

\def\rank{\mbox{\rm rank}}
\def\rar{\rightarrow}

\def\tdeg{\mbox{\rm tdeg}}

\def\tr{\mbox{\rm tr}}
\def\tratto{\mbox{\rule{3mm}{.2mm}$\;\!$}}

\def\g2{{\mathbf g}}

\def\C{\mathcal{C}}
\def\B{\mathcal{B}}

\def\M{\mathfrak{M}}
\def\P{\mathcal{P}}

\begin{document}

\title{\sc  Generalization of bi-canonical degrees} 
\thanks{AMS 2020 {\em Mathematics Subject Classification}.
Primary 13H15;  Secondary 13H10, 13A99.\\
{\bf  Key Words and Phrases:}  Bi-canonical degree, canonical module.\\
The third author was partially supported by the Sabbatical Leave Program at Southern Connecticut State University (Spring 2022). }

\author{J. Brennan}\address{Department of Mathematics, University of Central Florida, 4393 Andromeda Loop N
Orlando, FL 32816, U.S.A.}
\email{Joseph.Brennan@ucf.edu}

\author{L. Ghezzi} \address{Department of Mathematics, New York City College of Technology and the Graduate Center, The City University of New York,
 300 Jay Street, Brooklyn, NY 11201, U.S.A.;
365 Fifth Avenue,
New York, NY 10016, U.S.A.}
 \email{lghezzi@citytech.cuny.edu}

\author{J. Hong}
\address{Department of Mathematics, Southern Connecticut State
University, 501 Crescent Street, New Haven, CT 06515-1533, U.S.A.}
\email{hongj2@southernct.edu}

\author{W. V. Vasconcelos}
\address{Department of Mathematics, Rutgers University, 110
Frelinghuysen Rd, Piscataway, NJ 08854-8019, U.S.A.}

\maketitle

{\em \small Dedicated to Professor Rafael Villareal on the occasion of his birthday for his groundbreaking contributions to Algebra, particularly to Commutative Algebra.}

\begin{abstract}
We discuss invariants of Cohen-Macaulay local rings that admit a canonical module $\omega$. 
 Attached to each such ring $R$,  when $\omega$ is an ideal, there are integers--the type of $R$, the reduction number of $\omega$--that provide valuable metrics to express the deviation of $R$ from being a Gorenstein ring. 
 In \cite{blue1} and \cite{bideg} we enlarged this list with the canonical degree and the bi-canonical degree. In this work we extend the bi-canonical degree to rings where $\omega$ is not necessarily an ideal. We also discuss generalizations to rings without canonical modules but admitting modules sharing some of their properties. 
\end{abstract}

\section{Introduction}

 Let $(R, \m)$ be a Cohen-Macaulay local ring of dimension $d$ that has a canonical module $\omega$. Our central viewpoint is to look
at the properties of $\omega$ as a way to refine our understanding
of $R$.  We recall that $R$ is Gorenstein when $\omega$ is isomorphic to $R$. In \cite{blue1} and \cite{bideg}, if the canonical module $\omega$ is an ideal, we treated metrics aimed at measuring the deviation from $R$ being Gorenstein.
More precisely, two integers arise by considering the following lenghts (when $R$ has dimension one):

\begin{enumerate}

\item If $(a)$ is a minimal reduction of $\omega$, $\cdeg(R) = \lambda(\omega/(a)).$

\item If ${\omega}^{**}$ is the bidual of $\omega$,  $\ddeg(R)=\lambda(\omega^{**}/\omega).$

\end{enumerate}
 
 The canonical degree $\cdeg(R)$ is the focus of  \cite{blue1} and the bi-canonical degree $\ddeg(R)$ is the focus of \cite{bideg}.
They are extended to higher dimensions (see Definition~\ref{defcdeg} and Definition~\ref{bideg}). 

\smallskip

We note that both degrees are invariants of the ring and they vanish if and only if $R$ is Gorenstein in codimension one. 
One of the main points is that the minimal values of these degrees lead to a stratification of the Cohen-Macaulay property, such as almost Gorenstein rings \cite[ 2.4, 2.5, 3.2, 3.3, 6.5]{blue1} and Goto rings \cite[3.1, 4.2, 5.1]{bideg}. For instance, one dimensional almost Gorenstein rings are characterized by having the smallest possible canonical degree.
 
 \smallskip
 An interesting broad question is how to extend these degrees to more general settings.  In this paper we focus on the generalization of the bi-canonical degree (the more suitable degree for computations) to two important cases, outlined below.
 \smallskip
 
First, in Section~\ref{survey} we give a brief survey of the canonical degree and the bi-canonical degree. These are recently introduced degrees but also descendants of a long line of degree functions established by Vasconcelos.  His extensive research on arithmetic degrees (adeg, gdeg, jdet) and cohomological degrees (hdeg, bdeg) demonstrated 
a good understanding of algebraic structures of rings and developed rich applications to normalization of algebras, ideals, and modules. In Section~\ref{notideal}
we extend the bi-canonical degree to a Cohen-Macaulay local ring whose canonical module is not necessarily an ideal. The definition involves the classical theory developed by Auslander \cite{AusBr}.

\begin{Definition}{\rm (Definition~\ref{bidegext}).
Let $(R, \m)$ be a Cohen--Macaulay local ring with a canonical module $\omega$. The {\em bi-canonical degree} of $R$ is 
\[ \ddeg(R) = \deg( \Ext_{R}^1(D(\omega), R) ) + \deg( \Ext_{R}^2(D(\omega), R) ),\]
where ${\ds D(\omega) }$ is the Auslander dual of $\omega$ and $\deg(\tratto)$ is the multiplicity associated with the $\m$--adic filtration.
}\end{Definition}

Note that the condition that the canonical module $\omega$ is an ideal is equivalent to the completion of R being generically Gorenstein. Without the constraint of $\omega$ being an ideal, our new definition and results of bi-canonical degree can be applied to more broad classes of rings.
In Proposition~\ref{torsionless} we discuss conditions under which the canonical module is an ideal, and we see that 
the above definition coincides with the classical one in such case. We also have that $\ddeg(R)=0$ if and only if $R$ is Gorenstein in codimension one (Proposition~\ref{vanishingbideg}). In Proposition~\ref{bidegcomp} we compute the free presentation of the Alexander dual when $R$ is the quotient of a regular local ring. This is helpful in computing bi-canonical degrees in explicit examples.
In Proposition~\ref{trace} we extend a result of \cite{bideg} relating the bi-canonical degree to the trace of the canonical module. We also propose some open questions, inspired by results of \cite{bideg}.

\smallskip
 
In Section~\ref{precanonical} 
we consider Cohen-Macaulay local rings that do not necessarily have a canonical module. 
We look for rings that have ideals with properties similar to those of a canonical ideal. Ideals with similar sets of conditions have appeared in the literature, notably among them closed ideals \cite{BV1} and semidualizing (spherical) modules \cite{V74}.
 We define a class of ideals in this setting.
 
\begin{Definition}{\rm (Definition~\ref{precanonicalideal})
Let $(R, \m)$ be a Cohen-Macaulay local ring. An $R$-ideal $\P$ is called a precanonical ideal if it is closed ($\Hom_{R}(\P, \P)=R$) and $\Ext_{R}^{1}(\P, \P)=0$. 
}
\end{Definition}

We prove conditions under which a precanonical ideal is canonical. 

 \begin{Theorem}{\rm (Theorem~\ref{Ulrich})}
Let $(R, \m)$ be a $1$-dimensional Cohen-Macaulay local ring that has a canonical ideal.
Let $\P$ be an $\m$--primary precanonical ideal of $R$. 
If $c \in \P$ is such that $\P/(c)\simeq (R/\m)^n$, for some $n$, then $\P$ is a canonical ideal.
\end{Theorem}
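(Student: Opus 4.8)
Throughout, write $k=R/\m$ and fix a canonical ideal $\omega$ of $R$. The strategy is to extract from the two precanonical conditions that $\P$ is a maximal Cohen--Macaulay $R$--module of Cohen--Macaulay type $1$, and then to recognize it as $\omega$ via $\omega$--duality. We may assume $n\ge 1$ (if $n=0$ then $\P=(c)\simeq R$). From the exact sequence $0\to\P/(c)\to R/(c)\to R/\P\to 0$ and the hypothesis that $\P$ is $\m$--primary we get $\lambda(R/(c))=n+\lambda(R/\P)<\infty$, so $c$ is a parameter, hence a nonzerodivisor on the one--dimensional Cohen--Macaulay ring $R$. Consequently $\P$ is faithful and torsion-free, i.e.\ a maximal Cohen--Macaulay module, and multiplication by $c$ turns the inclusion $(c)\subseteq\P$ into a short exact sequence
\[ 0\longrightarrow R\xrightarrow{\;\cdot c\;}\P\longrightarrow k^{\,n}\longrightarrow 0 . \]

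Next I would apply $\Hom_R(-,\P)$ to this sequence. Torsion-freeness gives $\Hom_R(k,\P)=0$, and the precanonical hypotheses give $\Hom_R(\P,\P)=R$ and $\Ext_R^1(\P,\P)=0$; moreover the induced map $\Hom_R(\P,\P)\to\Hom_R(R,\P)=\P$ is precomposition with $R\xrightarrow{\cdot c}\P$, hence is again multiplication by $c$. The long exact sequence therefore collapses to
\[ 0\longrightarrow R\xrightarrow{\;\cdot c\;}\P\longrightarrow \Ext_R^1(k,\P)^{\,n}\longrightarrow 0 , \]
and comparing cokernels with the previous display gives $\Ext_R^1(k,\P)^{\,n}\simeq k^{\,n}$, so $\dim_k\Ext_R^1(k,\P)=1$. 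In other words the Cohen--Macaulay type of $\P$ equals $1$.

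To conclude, set $(-)^{\dagger}=\Hom_R(-,\omega)$. Since $\P$ is maximal Cohen--Macaulay, $\P^{\dagger}$ is maximal Cohen--Macaulay, the biduality morphism $\P\to\P^{\dagger\dagger}$ is an isomorphism, and the standard duality between the minimal number of generators and the type for maximal Cohen--Macaulay modules over a Cohen--Macaulay local ring with canonical module yields $\mu_R(\P^{\dagger})=1$ (the type of $\P$). Thus $\P^{\dagger}$ is cyclic, say $\P^{\dagger}\simeq R/J$ with $J=\ann_R(\P^{\dagger})$, and hence $\P\simeq\P^{\dagger\dagger}=\Hom_R(R/J,\omega)$ is annihilated by $J$. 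As $\P$ is faithful, $J=0$, so $\P^{\dagger}\simeq R$ and $\P\simeq\Hom_R(R,\omega)=\omega$; that is, $\P$ is a canonical ideal.

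The heart of the argument is the second step: producing the bound ``type $\le 1$'' purely from $\Hom_R(\P,\P)=R$ and $\Ext_R^1(\P,\P)=0$, which in practice amounts to tracking the connecting maps carefully enough to identify the cokernel of $R\xrightarrow{\cdot c}\P$ in the dualized sequence with $\P/(c)\simeq k^{n}$. The final step — upgrading a faithful maximal Cohen--Macaulay module of type $1$ to $\omega$ itself — is then formal, relying only on the identity $\mu_R(M^{\dagger})=r(M)$ for maximal Cohen--Macaulay $M$ and on the faithfulness of $\P$.
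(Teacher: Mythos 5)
Your argument is correct, and its core step coincides with the paper's: both apply $\Hom_R(-,\P)$ to $0 \to R \xrightarrow{\;c\;} \P \to k^n \to 0$, use the two precanonical hypotheses $\Hom_R(\P,\P)=R$ and $\Ext^1_R(\P,\P)=0$ to identify the cokernel of the induced map $R \xrightarrow{\;c\;} \P$ with $\Ext^1_R(k,\P)^n$, and compare with $\P/(c)\simeq k^n$ to conclude $\Ext^1_R(k,\P)\simeq k$. Where you genuinely diverge is the endgame. The paper converts $\Ext^1_R(k,\P)\simeq \Hom_R(k,\P/c\P)\simeq k$ into the statement that the socle of $R/\P$ (which embeds into $\P/c\P$ via $R/\P\simeq (c)/c\P$) is one-dimensional, concludes that $\P$ is irreducible, and then invokes \cite[Proposition 3.4]{bideg}, which says that an irreducible $\m$-primary ideal of a one-dimensional Cohen--Macaulay local ring possessing a canonical ideal is canonical. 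You instead read $\Ext^1_R(k,\P)\simeq k$ as saying that the maximal Cohen--Macaulay module $\P$ has type $1$, and finish by canonical duality: $\mu(\P^{\dagger})=r(\P)=1$, so $\P^{\dagger}\simeq R/J$, and biduality together with faithfulness of $\P$ force $J=0$, whence $\P\simeq\omega$. Your finish is self-contained modulo standard facts from Bruns--Herzog, while the paper's leans on the cited proposition from the earlier work; the two are close in spirit, since that proposition is itself a socle/type argument. One small caveat: your parenthetical disposal of the case $n=0$ does not actually work --- if $\P=(c)$ then $\P\simeq R$, which is a canonical ideal only when $R$ is Gorenstein --- but this degenerate case is equally invisible in the paper's proof, so the theorem should be read with $n\ge 1$, i.e.\ $(c)\subsetneq\P$.
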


\smallskip
We define the {\em bi-canonical degree} of $R$ {\em relative to} $\P$ as 
$\ddeg_{\P}(R) = \deg(\P^{**}/\P)$. It is interesting to ask which properties we can obtain from this metric to understand the structures of the ideal and the ring. We end the section by showing in Proposition~\ref{resddeg3} that if  $\deg(\P^{**}/\P)=0$ in a one dimensional ring, then $\P$ is principal.

\section{Canonical and Bi-Canonical Degrees of Rings with Canonical Ideals}\label{survey}

Let $(R, \m)$ be a Cohen-Macaulay local ring of dimension $d$ that has a canonical ideal $\C$. Recall that a canonical ideal is an $R$--ideal isomorphic to a canonical module of $R$.
We look at the properties of $\C$ as a way to refine our understanding of $R$.  
In this section we recall the definition and important properties of the canonical and bicanonical degrees from \cite{blue1} and \cite{bideg}. The canonical degree requires knowledge of the Hilbert coefficients $\e_0(\tratto)$ of $\m$-primary ideals.

\begin{Definition}\label{defcdeg}{\rm \cite[Theorem 2.2, Definition 2.3]{blue1}
Let $(R, \m)$ be a Cohen-Macaulay local ring  of dimension $d \geq 1$ that has a canonical ideal $\C$. The {\em canonical degree of $R$} is the integer
  \[ \cdeg(R)= \sum_{\tiny \h(\p)=1} \cdeg(R_{\p}) \deg(R/\p) = \sum_{\tiny \h(\p)=1} [\e_{0}(\C_{\p}) - \l((R/\C)_{\p})] \deg(R/\p).\]
}\end{Definition}

\begin{Proposition}\label{cdegvan}{\rm \cite[Corollary 2.4]{blue1}}
Let $(R, \m)$ be a Cohen-Macaulay local ring  of dimension $d \geq 1$ that has a canonical ideal $\C$.
Then $\cdeg(R)\geq 0$ and vanishes if and only if  $R$ is Gorenstein in codimension $1$.
\end{Proposition}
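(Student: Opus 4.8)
The plan is to localize at the height-one primes and reduce the whole statement to a length computation in dimension one. By Definition~\ref{defcdeg} we have $\cdeg(R)=\sum_{\h(\p)=1}\cdeg(R_\p)\,\deg(R/\p)$ with each $\deg(R/\p)>0$, so $\cdeg(R)\geq 0$ will follow once we know $\cdeg(R_\p)\geq 0$ for every height-one prime $\p$, and $\cdeg(R)=0$ will be equivalent to $\cdeg(R_\p)=0$ for all such $\p$. For a height-one prime $\p$ with $\C\not\subseteq\p$ we have $\C_\p=R_\p$, hence $R_\p$ is Gorenstein and $\cdeg(R_\p)=0$; so it suffices to analyze $S:=R_\p$ for $\C\subseteq\p$, a one-dimensional Cohen--Macaulay local ring in which $\mathfrak c:=\C_\p$ is an $\m$-primary canonical ideal (it contains a nonzerodivisor since $\C\cong\omega$ has positive grade), and where $\cdeg(S)=\e_0(\mathfrak c)-\lambda(S/\mathfrak c)$.

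Next I would carry out the dimension-one computation. After the faithfully flat base change passing from $S$ to $S[t]_{\m S[t]}$ --- which preserves the relevant lengths and multiplicities, the Cohen--Macaulay and Gorenstein properties, and carries $\mathfrak c$ to a canonical ideal --- we may assume the residue field is infinite, so that $\mathfrak c$ admits a principal minimal reduction $(a)$ with $a$ a nonzerodivisor. Then $\e_0(\mathfrak c)=\e_0((a))=\lambda(S/(a))$ because $a$ is a parameter in a Cohen--Macaulay ring, and the exact sequence $0\to\mathfrak c/(a)\to S/(a)\to S/\mathfrak c\to 0$ yields
\[ \cdeg(S)=\e_0(\mathfrak c)-\lambda(S/\mathfrak c)=\lambda(\mathfrak c/(a))\geq 0. \]
Equality holds precisely when $\mathfrak c=(a)$, i.e. when $\mathfrak c$ is a principal ideal; since $\mathfrak c\cong\omega_S$, this happens exactly when $\omega_S$ is free of rank one, that is, when $S$ is Gorenstein.

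Finally I would reassemble the pieces: $\cdeg(R)\geq 0$, and $\cdeg(R)=0$ if and only if $R_\p$ is Gorenstein for every prime $\p$ of height one. Because $R$ possesses a canonical ideal, $R$ is automatically Gorenstein in codimension zero --- a nonzerodivisor of $\C$ avoids every minimal prime $\p$, so $\C_\p=R_\p$ is free there --- and hence the last condition is exactly that $R$ be Gorenstein in codimension one. The only steps demanding real care are the standard but slightly technical ones: checking that $\cdeg$, the lengths, and the multiplicities are unchanged under the residue-field extension and that the minimal reduction $(a)$ persists, together with the identification of a principal canonical ideal with Gorensteinness; everything else is bookkeeping with the displayed length formula.
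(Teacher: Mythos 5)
Your proposal is correct, and it follows essentially the same route as the cited source \cite[Theorem 2.2, Corollary 2.4]{blue1} (the present paper only quotes the result): reduce via the defining sum to the one-dimensional local rings $R_\p$, pass to an infinite residue field to get a principal minimal reduction $(a)$ of the localized canonical ideal, and use $\cdeg(R_\p)=\lambda(\C_\p/(a))\geq 0$ with equality exactly when $\C_\p$ is principal, i.e.\ $R_\p$ is Gorenstein. The codimension-zero observation that a canonical ideal forces $R$ to be generically Gorenstein is also the standard closing step, so nothing is missing.
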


If the canonical ideal $\C$ is equimultiple with a minimal reduction $(a)$, then  
\[  \cdeg(R) = \deg(\C/(a)) = \e_0(\m, \C/(a)). \]
We recall that a Cohen-Macaulay local ring $R$ with a canonical module $\omega$  is said to be an {\em almost Gorenstein} ring if there exists an exact sequence of $R$-modules
${\ds 0 \rightarrow R \rightarrow \omega\rightarrow X \rightarrow  0}$
such that $\nu(X)=\e_0(X)$, where $\nu(X)$ is the minimal number of generators of $X$ \cite[Definition 3.3]{GTT15}. 
 
\begin{Proposition}\label{cdegag}{\rm  \cite[Corollary 2.5, Proposition 3.2]{blue1}  }
Let $(R, \m)$ be a Cohen-Macaulay local ring  of dimension $d \geq 1$ that has a canonical ideal $\C$. Suppose that $\C$ is equimultiple.
Let $r$ denote the type of $R$.
\begin{enumerate}[{\rm (1)}]
\item $\cdeg(R) =0$ if and only if $R$ is Gorenstein.
\item $\cdeg(R) \geq r-1$. 
\item If $\cdeg (R) = r -1$, then $R$ is an almost Gorenstein ring.
\end{enumerate}
\end{Proposition}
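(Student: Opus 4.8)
The plan is to reduce everything to the single module $M:=\C/(a)$, using the formula recorded just before the statement: in the equimultiple case $\cdeg(R)=\deg(M)=\e_0(\m,M)$, where $(a)$ is a minimal reduction of $\C$. First I would record three preliminary facts. (i) The element $a$ is a nonzerodivisor: a canonical ideal has height one, hence so does its reduction $(a)$, so $a$ avoids every minimal prime of the Cohen--Macaulay ring $R$. (ii) The element $a$ is a minimal generator of $\C$: were $a\in\m\C$, then $\C^{s+1}=a\C^{s}\subseteq\m\C^{s+1}$ for $s\gg0$, and Nakayama would force $\C^{s+1}=0$, contradicting (i); consequently $a$ extends to a minimal generating set of $\C$, so $\nu(M)=\nu(\C)-1=r-1$, since $\C\cong\omega$ and $r$ is the type of $R$. (iii) The module $M$ is either zero or Cohen--Macaulay of dimension $d-1$: applying the depth lemma to $0\to R\xrightarrow{\,\cdot a\,}\C\to M\to0$ and using that $\C\cong\omega$ is a maximal Cohen--Macaulay module gives $\depth M\ge d-1$, while the inclusion $M\subseteq R/(a)$ gives $\dim M\le d-1$; together these force the dichotomy.

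Granting (iii), I would dispatch (1) and (2) at once. A nonzero Cohen--Macaulay module of non-negative dimension has positive multiplicity, so $\cdeg(R)=\e_0(\m,M)$ vanishes precisely when $M=0$, i.e.\ when $\C=(a)$; and $\C=(a)$ is equivalent to $\C$ being principal (one direction trivial, the other because a principal ideal equals each of its reductions), hence to $\omega\cong\C\cong R$, i.e.\ to $R$ being Gorenstein --- this proves (1). For (2): if $M=0$ then $R$ is Gorenstein, $r=1$, and $\cdeg(R)=0=r-1$; otherwise, after the harmless passage to $R[X]_{\m R[X]}$ if the residue field is finite, choose a general system of parameters $z_{1},\dots,z_{d-1}\in\m$ for $M$, and then, $M$ being Cohen--Macaulay, $\cdeg(R)=\e_0(\m,M)=\lambda\bigl(M/(z_{1},\dots,z_{d-1})M\bigr)\ge\nu(M)=r-1$.

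For (3), suppose $\cdeg(R)=r-1$. If $r=1$ then $R$ is Gorenstein, hence almost Gorenstein. If $r\ge2$, then transporting the sequence $0\to R\xrightarrow{\,\cdot a\,}\C\to M\to0$ through the isomorphism $\C\cong\omega$ gives an exact sequence $0\to R\to\omega\to X\to0$ with $X=M$, and the computations above yield $\nu(X)=r-1=\cdeg(R)=\e_0(\m,X)$, which is exactly the defining condition for $R$ to be an almost Gorenstein ring.

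The step that really requires an idea rather than bookkeeping is (iii): the Cohen--Macaulayness of $M$ is what legitimizes the inequality $\e_0(\m,M)\ge\nu(M)$ --- this can fail for a general module (for instance $\e_0(\m,\m)$ equals the multiplicity $\e_0(\m,R)$, which may be strictly smaller than $\nu(\m)$) --- and it also ensures that $M$ is unmixed of dimension $d-1$, so no lower-dimensional component disturbs the multiplicity computed by the equimultiple formula. Everything else is routine manipulation of that formula together with Nakayama's lemma and the depth lemma.
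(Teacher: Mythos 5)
Your argument is correct, and it is essentially the proof given in the cited source \cite{blue1} (the present paper only quotes the result): reduce to $M=\C/(a)$, verify that $a$ is a nonzerodivisor and a minimal generator so that $\nu(M)=r-1$, use the depth lemma to see $M$ is zero or Cohen--Macaulay of dimension $d-1$, and compare $\e_0(\m,M)$ with $\nu(M)$, with equality producing exactly the almost Gorenstein sequence $0\to R\to\omega\to X\to 0$. No gaps; your isolation of the Cohen--Macaulayness of $M$ as the key point is exactly right.
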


\begin{Proposition}\label{1dimalmostg}{\rm \cite[Proposition 3.3]{blue1}}
Let $(R,\m)$ be a $1$-dimensional Cohen-Macaulay local ring with a canonical ideal. Let $r$ denote the type of $R$.
 Then $\cdeg(R)= r-1$ if and only if $R$ is an almost Gorenstein ring.
\end{Proposition}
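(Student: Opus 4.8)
The plan is to extract one implication for free from Proposition~\ref{cdegag} and to prove the reverse one by hand. First I would fix the dictionary available in dimension one: the canonical ideal $\C$ is $\m$-primary (it has height one), the type $r$ of $R$ equals $\nu(\C)$, and Definition~\ref{defcdeg} collapses --- the only height-one prime being $\m$ --- to $\cdeg(R)=\e_0(\C)-\lambda(R/\C)$. Since an $\m$-primary ideal is equimultiple, Proposition~\ref{cdegag} already gives both $\cdeg(R)\ge r-1$ and the implication ``$\cdeg(R)=r-1\Rightarrow R$ almost Gorenstein''. So the whole content reduces to one assertion: if $R$ is almost Gorenstein, then $\cdeg(R)\le r-1$.

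To prove this, I would take a defining exact sequence $0\to R\to\omega\to X\to 0$ with $\nu(X)=\e_0(X)$ and transport it through an isomorphism $\omega\simeq\C$, turning it into $0\to R\to\C\to\C/cR\to 0$ with the first map multiplication by a nonzerodivisor $c\in\C$. Since $c$ is a nonzerodivisor in a one-dimensional Cohen--Macaulay ring, $X\simeq\C/cR$ has finite length, so $\e_0(X)=\lambda(X)$; combined with $\nu(X)=\e_0(X)$ this forces $\m X=0$, i.e.\ $\m\C\subseteq cR\subseteq\C$. From $0\to\C/cR\to R/cR\to R/\C\to 0$ one reads off $\lambda(X)=\e_0(cR)-\lambda(R/\C)$, so with $\delta:=\e_0(cR)-\e_0(\C)\ge 0$ we get
\[ \cdeg(R)\;=\;\e_0(\C)-\lambda(R/\C)\;=\;\lambda(X)-\delta\,, \]
while, $X$ being a quotient of $\C$ annihilated by $\m$, also $\lambda(X)=\nu(X)\le\nu(\C)=r$. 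It now suffices to show $\lambda(X)\le r-1+\delta$.

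This I would settle by a dichotomy on $\delta$. If $\delta\ge 1$, the bound $\lambda(X)\le r$ already does it. If $\delta=0$, then $cR$ is a reduction of $\C$ (equal multiplicities together with containment, by Rees's criterion), and then $c\notin\m\C$: otherwise $\m\C$ would lie between the reduction $cR$ and $\C$, hence be a reduction of $\C$ itself, forcing $\C^{n+1}=\m\C^{n+1}$ and so $\C^{n+1}=0$ by Nakayama --- absurd, as $\C$ contains the nonzerodivisor $c$. Hence $c$ is part of a minimal generating set of $\C$, so $X=\C/cR$ needs at most $r-1$ generators and $\lambda(X)=\nu(X)\le r-1$. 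Either way $\cdeg(R)=\lambda(X)-\delta\le r-1$, completing the argument. The one genuinely delicate point is precisely this dichotomy: the injection $R\hookrightarrow\omega$ witnessing the almost Gorenstein property need not present $c$ as a minimal reduction of $\C$, and introducing $\delta$ to measure the failure of $cR$ to reduce $\C$ is what lets the two sources of slack be traded against each other, with the boundary case $\delta=0$ being exactly where $c$ is forced to be a minimal generator. The rest --- the finiteness of $\lambda(X)$ and the length bookkeeping --- is routine.
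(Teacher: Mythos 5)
Your argument is correct, and it is worth noting that the paper itself offers no proof of this statement --- it is quoted verbatim from \cite[Proposition 3.3]{blue1} --- so your write-up supplies an actual argument where the paper only cites one. Your reduction of the problem is sound: in dimension one the canonical ideal is $\m$-primary (hence equimultiple), so Proposition~\ref{cdegag} really does hand you $\cdeg(R)\ge r-1$ and the implication from $\cdeg(R)=r-1$ to almost Gorenstein, leaving only the converse. Your treatment of the converse checks out step by step: $X=\omega/R$ is torsion, hence of finite length, so $\e_0(X)=\lambda(X)$ and the almost Gorenstein condition forces $\m X=0$; the length bookkeeping $\lambda(X)=\e_0(cR)-\lambda(R/\C)$ and $\cdeg(R)=\lambda(X)-\delta$ is right; and in the case $\delta=0$ the appeal to Rees's multiplicity criterion is legitimate because a Cohen--Macaulay local ring is formally equidimensional, after which the Nakayama argument correctly shows $c\notin\m\C$ and hence $\nu(X)\le r-1$. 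The proof in \cite{blue1} instead goes through a minimal reduction $(a)$ of $\C$ and the Goto--Matsuoka--Phuong characterization of almost Gorenstein rings via $\m\C\subseteq(a)$, writing $\cdeg(R)=\lambda(\C/(a))\ge\lambda(\C/(\m\C+(a)))=r-1$; your route is different in a useful way, since by working directly with the element $c$ furnished by the almost Gorenstein embedding and introducing $\delta$ to absorb the possibility that $cR$ is not a reduction, you avoid invoking the existence of minimal reductions (and thus any implicit infinite-residue-field hypothesis) in the direction you prove by hand. The only cosmetic omission is the degenerate case $X=0$ (i.e.\ $R$ Gorenstein), which your inequalities cover anyway.
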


The reduction number of a canonical ideal of $R$ is an invariant of the ring and called the {\em canonical index} of $R$ \cite[Proposition 4.1, Definition 4.2]{blue1}.

\begin{Example}\label{Ex1linkage}{\rm \cite[Example 4.8]{blue1}   Let $A=k[X, Y, Z]$, let
$I=(X^2-YZ, Y^2-XZ, Z^2-XY)$ and $R=A/I$. Let $x, y, z$ be the images of $X, Y, Z$ in $R$. By \cite[Theorem 10.6.5]{ABAbook}, we see that $\C=(x, z)$ is a canonical ideal of $R$ with a minimal reduction $(x)$. 
Then the canonical index is $2$ and $\cdeg(R)=1$. Note that $R$ is almost Gorenstein. }
\end{Example}

Degree formulas are often statements about change of rings.

\begin{Theorem}\label{augmented}{\rm \cite[Theorem 6.8, Corollary 6.9]{blue1} \cite[Theorem 6.5]{GMP11}} Let $(R,\m)$ be a $1$-dimensional Cohen-Macaulay local ring with infinite residue field and a canonical ideal. Suppose $R$ is not a discrete valuation ring.  Let $R \ltimes \m$ denote the idealization of $\m$ over $R$. 
\begin{enumerate}[{\rm (1)}]
\item ${\ds \cdeg(R \ltimes \m)= 2 \, \cdeg(R)+2}$.  
\item $R$ is an almost Gorenstein ring if and only if $R \ltimes \m$ is an almost Gorenstein ring.
\end{enumerate}
\end{Theorem}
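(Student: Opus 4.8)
The plan is to pass to the canonical module of $A=R\ltimes\m$ and reduce both statements to length computations over $R$. Since $\m$ is a maximal Cohen--Macaulay $R$-module, $A$ is a one-dimensional Cohen--Macaulay local ring with canonical module $\omega_A=\Hom_R(A,\omega)\cong\omega\oplus\Hom_R(\m,\omega)$, the $A$-action being $(r,m)\cdot(c,\psi)=(rc+\psi(m),\,r\psi)$. Fix a canonical ideal $\C\subseteq R$ inside the total ring of fractions $Q$ and set $N=\C:_Q\m\cong\Hom_R(\m,\C)$. The first step is the identity $\lambda(N/\C)=1$: applying $\Hom_R(-,\C)$ to $0\to\m\to R\to R/\m\to 0$ and using $\Hom_R(R/\m,\C)=0$ and $\Ext_R^1(R/\m,\C)\cong R/\m$ yields $0\to\C\to N\to R/\m\to 0$.

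Next I would realize $\omega_A$ as an ideal. Since $R$ has a canonical ideal it is generically Gorenstein, hence so is $A$, so $\omega_A$ is isomorphic to an $A$-ideal. Normalizing $\C$ so that $\C\subseteq\m$ and $N\subseteq R$, the coordinate swap $(c,\psi)\mapsto(\psi,c)$ is an $A$-isomorphism of $\omega_A$ onto the ideal $\C_A=N\times\C\subseteq R\times\m=A$. With this model the dimension-one formula gives $\cdeg(A)=\lambda(\C_A/\alpha A)$ for a general $\alpha=(b_0,a_0)\in\C_A$; projecting to the first coordinate produces the exact sequence $0\to\C/b_0\m\to\C_A/\alpha A\to N/b_0R\to 0$. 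Evaluating $\lambda(\C/b_0\m)=\e_0(N)+1-\lambda(R/\C)$ and $\lambda(N/b_0R)=\e_0(N)-\lambda(R/N)$ then gives $\cdeg(A)=2\e_0(N)+1-\lambda(R/\C)-\lambda(R/N)$.

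To conclude part (1) I substitute $\lambda(R/N)=\lambda(R/\C)-1$ (from $\lambda(N/\C)=1$) and $\e_0(N)=\e_0(\C)$, collapsing the expression to $\cdeg(A)=2[\e_0(\C)-\lambda(R/\C)]+2=2\,\cdeg(R)+2$. The hardest part is this final step: producing the ideal model with the required normalizations $\C\subseteq\m$, $N\subseteq R$, and proving $\e_0(N)=\e_0(\C)$, i.e. that enlarging $\C$ to the length-one-bigger ideal $N$ leaves the Hilbert--Samuel multiplicity unchanged. I expect to get this from the squeeze $\e_0(\m)\le\e_0(N)\le\e_0(\C)$ available once $\C$ is chosen to be a reduction of $\m$ with $\C\subseteq N\subseteq\m$; this is exactly where the hypothesis that $R$ is not a discrete valuation ring (so $\m$ is non-principal) enters.

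For part (2) I would compute the type $r(A)=\mu_A(\omega_A)$. A short computation of $\omega_A$ modulo the maximal ideal $\m\ltimes\m$ of $A$ reduces this to $r(A)=r(R)+\mu_R(\Hom_R(\m,\omega))$, and the key input is $\mu_R(\Hom_R(\m,\omega))=r(R)+1$; I would prove the latter either from the canonical duality $\mu(\Hom_R(\m,\omega))=r(\m)$, or directly by reducing modulo a general minimal reduction $x$ and computing $\operatorname{Soc}(\m/x\m)$ via $0\to R/\m\to\m/x\m\to\m/xR\to 0$ (again using that $\m$ is non-principal). This gives $r(A)=2r(R)+1$. Combining with part (1) and Proposition~\ref{1dimalmostg} for both $R$ and $A$,
\[ A\text{ almost Gorenstein}\iff\cdeg(A)=r(A)-1\iff 2\,\cdeg(R)+2=2r(R)\iff\cdeg(R)=r(R)-1\iff R\text{ almost Gorenstein}, \]
which is statement (2).
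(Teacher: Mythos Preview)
The paper does not supply its own proof of this theorem; it is quoted from \cite{blue1} and \cite{GMP11} in the survey section, so there is no in-paper argument to compare against. I will therefore comment only on the internal soundness of your sketch.

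Your framework for part (1) is the right one and matches the approach in the cited sources: identify $\omega_A$ with $N\times\C$ (after the coordinate swap), use a general $\alpha=(b_0,a_0)$ as a minimal reduction of $\C_A$, and decompose $\lambda(\C_A/\alpha A)$ via the first projection. The length computations $\lambda(N/\C)=1$, $\lambda(\C/b_0\m)=\e_0(N)+1-\lambda(R/\C)$, and $\lambda(N/b_0R)=\e_0(N)-\lambda(R/N)$ are all correct, and part (2) goes through cleanly once (1) is known.

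The genuine gap is exactly where you flag it: the equality $\e_0(N)=\e_0(\C)$. Your proposed resolution --- normalize $\C$ to be a reduction of $\m$ so that the squeeze $\e_0(\m)\le\e_0(N)\le\e_0(\C)=\e_0(\m)$ applies --- is not available in general. Choosing $\C$ to be a reduction of $\m$ amounts to finding an embedding of $\omega$ into $R$ whose image contains an element of minimal possible order in every branch of $\bar R$; the natural candidate $\frac{a}{c_0}\C_0$ (with $(a)$ a reduction of $\m$ and $(c_0)$ a reduction of $\C_0$) is a fractional ideal that need not lie inside $R$, and there is no evident way to repair this simultaneously with the normalization $N\subseteq R$ that you need earlier. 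The non-DVR hypothesis is indeed relevant (in a DVR with $\C=(t^k)$ one has $\e_0(N)=k-1<k=\e_0(\C)$), but it does not by itself furnish the reduction you want. In the cited proofs this issue is handled differently --- in \cite{GMP11} by working with a fractional canonical ideal $K$ with $R\subseteq K\subseteq\bar R$ and computing the invariants of $A$ from $K$ rather than from an integral $\C$, which sidesteps the multiplicity comparison entirely. If you want to keep your integral-ideal model, you would instead need a direct argument that $\C$ is a reduction of $N=\C:_Q\m$; this is true, but it uses the canonical-ideal property of $\C$ (irreducibility is not enough) and is a separate lemma you have not supplied.
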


Several interesting examples involve the ring $\m: \m$.  

\begin{Theorem}\label{TCdeg}{\rm \cite[Theorem 8.3]{bideg}}
Let $(R, \m)$ be a Cohen Macaulay local ring of dimension $1$ with a canonical ideal.   Let $Q$ be the total ring of fractions of $R$ and $A  = \m:_ Q \m$.  
Suppose $(A, \M)$  is a local ring. Let  $e=[A/ \M : R/\m]$ and $r$ the type of $R$. Then  
\[ \cdeg(A) = e^{-1} ( \cdeg(R) + \e_0(\m) - 2r ).\] 
\end{Theorem}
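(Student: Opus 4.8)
The plan is to compute $\cdeg(A)$ through lengths of $R$-modules, the three structural ingredients being: (i) $A=\m:_Q\m$ coincides with $\m^{-1}=\Hom_R(\m,R)$; (ii) $\Hom_R(A,\omega_R)$ is a canonical module of $A$; and (iii) for a canonical ideal $\C$ of $R$ the product $\m\C$ is a canonical ideal of $A$, whose minimal reduction is the product of minimal reductions of $\C$ and of $\m$. We may assume $R$ is not a discrete valuation ring, and, since $\cdeg$, $\e_0(\m)$, the type $r$, and the degree $e$ are unaffected by the faithfully flat base change $R\to R[X]_{\m R[X]}$, we may assume $R/\m$ is infinite, so that the $\m$-primary ideals $\C$ and $\m$ and the $\M$-primary ideals of $A$ have principal minimal reductions.

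For (i): always $\m:_Q\m\subseteq R:_Q\m=\m^{-1}$, while $\m^{-1}\m$ is an ideal of $R$ containing $\m$ which is proper because $R$ is not a DVR; hence $\m^{-1}\m\subseteq\m$, i.e. $\m^{-1}\subseteq\m:_Q\m$. Applying $\Hom_R(-,R)$ to $0\to\m\to R\to R/\m\to0$, with $\depth R=1$ forcing $\Hom_R(R/\m,R)=0$ (and $\Ext^1_R(R,R)=0$), gives $0\to R\to\m^{-1}\to\Ext^1_R(R/\m,R)\to0$, so $\lambda_R(A/R)=\lambda_R(\Ext^1_R(R/\m,R))=r$. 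For (ii): $A$ is a module-finite, birational, maximal Cohen--Macaulay extension of $R$, so $\omega_A\cong\Hom_R(A,\omega_R)$; choosing an honest canonical ideal $\C\subseteq R$ of $R$, the fractional ideal $\C_A:=\Hom_R(A,\C)=\C:_QA$ is a canonical ideal of $A$ sitting inside $\C\subseteq R\subseteq A$. Applying $\Hom_R(-,\omega_R)$ to $0\to R\to A\to A/R\to0$ — using that $A$ is maximal Cohen--Macaulay, so $\Ext^1_R(A,\omega_R)=0$, and that $\Ext^1_R(A/R,\omega_R)$ is the Matlis dual of the finite-length module $A/R$ by local duality — produces $0\to\C_A\to\C\to(A/R)^\vee\to0$, hence $\lambda_R(\C/\C_A)=\lambda_R(A/R)=r$.

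For (iii): the inclusion $\m\C\subseteq\C_A$ is formal, since $(\m\C)A=(\m A)\C=\m\C\subseteq\C$; and $\lambda_R(\C/\m\C)=\nu_R(\C)=\nu_R(\omega_R)=r=\lambda_R(\C/\C_A)$, so $\m\C=\C_A$. If $(a)$ is a minimal reduction of $\C$ and $(a')$ a minimal reduction of $\m$ over $R$, then for $n\gg0$, $(\m\C)^{n+1}=\m^{n+1}\C^{n+1}=aa'(\m\C)^n$; thus $(aa')$ is a reduction of $\m\C=\C_A$ over $R$, and since $\C_A$ and its powers are ideals of $A$ it follows that $(aa')A$ is a reduction of $\C_A$ over $A$ — a minimal one, being principal while $\C_A$ is $\M$-primary of analytic spread one. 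Hence $\e_0(\C_A;A)=\lambda_A(A/aa'A)$.

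It remains to assemble. By definition (in dimension one) $\cdeg(A)=\lambda_A(\C_A/aa'A)=\lambda_A(A/aa'A)-\lambda_A(A/\C_A)$. Passing between $A$- and $R$-lengths via $\lambda_R=e\,\lambda_A$ on finite-length $A$-modules, using that $A$ is a maximal Cohen--Macaulay $R$-module of rank one to get $\lambda_R(A/aa'A)=\e(aa';A)=\e(aa';R)=\lambda_R(R/aa'R)$, and using $aR/aa'R\cong R/a'R$ to get $\lambda_R(R/aa'R)=\lambda_R(R/aR)+\lambda_R(R/a'R)=\e_0(\C)+\e_0(\m)$, we obtain
\begin{align*}
e\,\cdeg(A)&=\lambda_R(A/aa'A)-\lambda_R(A/\C_A)\\
&=\bigl(\e_0(\C)+\e_0(\m)\bigr)-\bigl(\lambda_R(A/R)+\lambda_R(R/\C)+\lambda_R(\C/\C_A)\bigr),
\end{align*}
and substituting $\lambda_R(A/R)=\lambda_R(\C/\C_A)=r$ together with $\cdeg(R)=\e_0(\C)-\lambda_R(R/\C)$ gives $\cdeg(A)=e^{-1}\bigl(\cdeg(R)+\e_0(\m)-2r\bigr)$. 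The main obstacle is step (iii): to see that the product ideal $\m\C$ is exactly the canonical ideal $\C_A=\Hom_R(A,\C)$ of $A$ — which uses both the identification $A=\m^{-1}$ (for $\lambda_R(A/R)=r$) and the local-duality sequence (for $\lambda_R(\C/\C_A)=r$) — and to control its minimal reduction over $A$; after that the remainder is routine bookkeeping with the change-of-rings formulas, the only delicate point being that multiplicities over $A$ and over $R$ agree because $A$ has rank one over $R$.
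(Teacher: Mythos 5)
The paper does not reproduce a proof of this theorem (it is quoted from \cite[Theorem 8.3]{bideg}), so I can only judge your argument on its own terms: it is correct, and it follows the natural route — identify $A=\m^{-1}$, realize $\omega_A$ as $\C:_QA$, show $\C:_QA=\m\C$ by the two colength-$r$ computations, and then do the length bookkeeping with a product of principal reductions — which is essentially the strategy of the cited source. The one point you gloss over is actually a point about the statement rather than the proof: for a discrete valuation ring one has $A=R$, $\cdeg(A)=0$, while the right-hand side equals $-1$, so the formula genuinely requires the hypothesis that $R$ is not a DVR (present in the original theorem, and needed for your identification $A=\m^{-1}$); your ``we may assume'' should be read as recording that missing hypothesis, not as a harmless reduction. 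All the individual steps I checked — $\lambda_R(A/R)=\lambda_R(\C/\C_A)=r$ via $\Ext^1_R(R/\m,R)$ and local duality, $\lambda_R(\C/\m\C)=\nu(\C)=r$ forcing $\m\C=\C_A$, $(aa')A$ being a reduction of $\C_A$ because $aa'\in\C_A$ and $\C_A$ is an $A$-ideal, and $\e(aa';A)=\e(aa';R)$ from $A$ having rank one — are sound.
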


 \begin{Example}{\rm \cite[Example 8.6]{bideg}
Let $R=k[X_1, \ldots, X_n]/L$ where $L$ is generated by all the quadratic monomials $x_ix_j$ with $i\neq j$. 
Note that $\m = (x_1)\oplus \cdots \oplus (x_n)$ and $\e_0(\m)=n$.  Moreover, the type of $R$ is $r = n-1$.
Since for $i\neq j$ the annihilator of $\Hom((x_i), (x_j))$ has grade positive, by \cite[Remark 8.2]{bideg},  we have
 \[ A =\Hom(\m,\m) = \Hom((x_1), (x_1))\times \cdots \times \Hom((x_n),(x_n))= k[x_1] \times \cdots \times k[x_n].\]
 In particular, $\cdeg(A) =0$.  By Theorem~\ref{TCdeg}, we have
 \[ \cdeg(R) = 2r -\e_0(\m) = 2(n-1)-n = n-2. \]
 Since $\cdeg(R) = r-1$, by Proposition~\ref{1dimalmostg}, $R$ is almost Gorenstein.
 }\end{Example}

\bigskip

The approach in \cite{blue1} is dependent on finding minimal reductions, which is a particularly hard task.
In \cite{bideg},  we chose an approach that seems more amenable to computation in classes of algebras such as monomial subrings, Rees algebras, and numerous classes of rings of dimension $1$.  In the natural embedding
\[ 0 \rar \C \rar \C^{**} \rar \B \rar 0 \]
the module $\B$ remains unchanged when $\C$ is replaced by another canonical module.

\begin{Definition}\label{bideg}{\rm \cite[Theorem 3.1]{bideg}
Let $(R, \m)$ be a Cohen-Macaulay local ring  of dimension $d \geq 1$  that has a canonical ideal $\C$. Then the {\em bi-canonical degree} of $R$ is 
  \[ \ddeg(R)= \deg(\C^{**}/\C) =
     \sum_{\tiny \h( \p)=1} \ddeg(R_{\p}) \deg(R/\p) = \sum_{\tiny \h(\p)=1} [\lambda(R_{\p}/\C_{\p}) - \lambda(R_{\p}/\C^{**}_{\p})] \deg(R/\p).
  \]   
}\end{Definition}

\begin{Proposition}\label{bidegvan}{\rm \cite[Theorem 3.1]{bideg} (Cf. Proposition~\ref{cdegvan})}
Let $(R, \m)$ be a Cohen-Macaulay local ring  of dimension $d \geq 1$  that has a canonical ideal $\C$.  Then $\ddeg(R)\geq 0$ and vanishes if and only if  $R$ is Gorenstein in codimension $1$.
\end{Proposition}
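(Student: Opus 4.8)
\emph{Proof plan.} The strategy is to read off the invariant from the short exact sequence
\[ 0 \lar \C \lar \C^{**} \lar \B \lar 0, \qquad \B = \C^{**}/\C, \]
and then to localize at the primes of height $\leq 1$. First one records that this sequence is exact: a canonical ideal, being isomorphic to the (faithful) canonical module, is a faithful ideal of the Noetherian ring $R$ and hence, by prime avoidance over $\Ass(R)$, contains a nonzerodivisor; thus $\C$ is torsion-free and $\C \to \C^{**}$ is injective. Writing $Q$ for the total ring of fractions and identifying $\Hom_R(\C, R)$ with the fractional ideal $R :_Q \C \supseteq R$, one also gets $\C \subseteq \C^{**} = R :_Q (R :_Q \C) \subseteq R :_Q R = R$, so $\C$, $\C^{**}$ and $\B$ all sit inside $R$; and replacing $\C$ by another canonical ideal $\C' \cong \C$ replaces the whole sequence by an isomorphic one, so $\B$, and therefore $\ddeg(R)$, is an invariant of $R$ alone.

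Next I would determine the support of $\B$. If $\p$ is a minimal prime of $R$, then $R_\p$ is Artinian, so the image in $R_\p$ of the nonzerodivisor contained in $\C$ is a unit; hence $\C_\p = R_\p$, and therefore $\C^{**}_\p = R_\p$ and $\B_\p = 0$. Thus $\supp(\B) \subseteq \{\p : \h(\p)\geq 1\}$ and $\dim\B \leq d-1$. Localizing the short exact sequence at a prime $\p$ of height one and using $\C_\p \subseteq \C^{**}_\p \subseteq R_\p$ gives $\lambda(\B_\p) = \lambda(R_\p/\C_\p) - \lambda(R_\p/\C^{**}_\p)$, and the associativity formula for multiplicities then produces exactly the formula of Definition~\ref{bideg} for $\ddeg(R)$. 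Each summand there is a nonnegative length times a positive degree, so $\ddeg(R) \geq 0$; and $\ddeg(R) = 0$ if and only if $\C_\p = \C^{**}_\p$ for every prime $\p$ of height one.

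It remains to establish the local statement: for a $1$-dimensional Cohen--Macaulay local ring $A$ (which we apply to $A = R_\p$) with a canonical ideal $\C_A$, one has $\C_A = \C^{**}_A$ if and only if $A$ is Gorenstein. One direction is immediate: if $A$ is Gorenstein then $\C_A \cong A$ is free, hence reflexive. For the converse I would argue with fractional ideals. The canonical module satisfies $\End_A(\C_A) = \C_A :_Q \C_A = A$, and applying $A :_Q (-)$ to $\tr_A(\C_A) = \C_A \cdot (A :_Q \C_A)$ yields $\tr_A(\C_A)^{-1} = \bigl(A :_Q (A :_Q \C_A)\bigr) :_Q \C_A = \C^{**}_A :_Q \C_A$; so reflexivity of $\C_A$ forces $\tr_A(\C_A)^{-1} = \C_A :_Q \C_A = A$. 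The decisive remaining step is to upgrade this to $\tr_A(\C_A) = A$; granting it, $\C_A \cdot (A :_Q \C_A) = A$ exhibits $\C_A$ as an invertible fractional ideal, which over the local ring $A$ is principal, so $\C_A \cong A$ and $A$ is Gorenstein. Finally, since $R$ has a canonical \emph{ideal} it is already Gorenstein at each minimal prime (indeed $\omega_{R_\p} \cong \C_\p \cong R_\p$ by the previous paragraph), so ``$R_\p$ Gorenstein for all $\h(\p) = 1$'' coincides with ``$R$ is Gorenstein in codimension one'', finishing the proof.

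The step I expect to be the main obstacle is precisely that last one: showing that a reflexive canonical ideal over a $1$-dimensional Cohen--Macaulay local ring is principal, equivalently that $\tr_A(\C_A) = A$. This is where the special structure of the canonical module must be used in an essential way, the relevant input being the description of the non-Gorenstein locus of $A$ as the zero set of $\tr_A(\omega_A)$, in the spirit of the trace computations of \cite{bideg}. It is useful to note that the Auslander--Bridger four-term sequence $0 \to \Ext^1_A(D(\C_A), A) \to \C_A \to \C^{**}_A \to \Ext^2_A(D(\C_A), A) \to 0$, where $D$ denotes the Auslander dual, localizes the difficulty: the kernel term vanishes automatically because $\C_A$ is torsion-free, so the entire obstruction to reflexivity is the cokernel $\Ext^2_A(D(\C_A), A) \cong \C^{**}_A/\C_A$, and one must show it can vanish only when $A$ is Gorenstein. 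Everything else is localization and elementary length bookkeeping.
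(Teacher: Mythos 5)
Your reduction to the one-dimensional local case is sound and is essentially the standard argument: $\C$ contains a nonzerodivisor, so $\C \subseteq \C^{**} \subseteq R$, the quotient $\B=\C^{**}/\C$ vanishes at the minimal primes, and the associativity formula converts $\deg(\B)$ into the sum over height-one primes appearing in Definition~\ref{bideg}; nonnegativity and the reduction of the vanishing statement to ``$\C_{\p}$ reflexive for all $\h(\p)=1$'' follow. The genuine gap is exactly the step you flag and then assume: that for a one-dimensional Cohen--Macaulay local ring $A$ a reflexive canonical ideal forces $A$ to be Gorenstein. As written (``granting it'') this is not a proof, and it is the entire content of the nontrivial implication. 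The paper disposes of it by citing Herzog--Kunz \cite[Korollar 7.29]{HK2} ($\omega$ is reflexive if and only if $R$ is Gorenstein in codimension one), exactly as in the proof of Proposition~\ref{vanishingbideg}; the source \cite{bideg} alternatively uses $\ddeg(R)=\lambda(R/\tr(\omega))$ together with the fact from \cite{HHS19} that $\tr(\omega)$ cuts out the non-Gorenstein locus.

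That said, your trace route can be closed with one more standard observation, so the gap is reparable rather than fatal. You have shown that reflexivity gives $\tr_A(\C_A)^{-1} = \C^{**}_A :_Q \C_A = \C_A :_Q \C_A = A$. Now $I := \tr_A(\C_A)$ contains a nonzerodivisor (it contains $\C_A\cdot 1$ via the inclusion $\C_A \subseteq A$), so if $I \neq A$ then $A/I \neq 0$ has $\grade(A/I)=1$, and dualizing $0 \rar I \rar A \rar A/I \rar 0$ gives $I^{-1}/A \cong \Ext^1_A(A/I,A)$, which is nonzero because $\Ext^{g}_A(M,A)\neq 0$ for any nonzero module $M$ of grade $g$. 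This contradicts $I^{-1}=A$; hence $\tr_A(\C_A)=A$, so $\C_A$ is invertible, hence principal over the local ring $A$, and $A \cong \omega_A$ is Gorenstein. With that paragraph added, your argument is complete and constitutes a legitimate alternative to the citation of \cite{HK2}.
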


\begin{Proposition}\label{bidegag}{\rm \cite[Theorem 4.2]{bideg} (Cf. Proposition~\ref{1dimalmostg})}
Let $(R, \m)$ be a Cohen-Macaulay local ring of dimension $1$ with a canonical ideal $\C$. If $R$ is almost Gorenstein, then $\ddeg(R) =1$.
\end{Proposition}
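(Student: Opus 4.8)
The plan is to realize $\C$ as a fractional canonical ideal of $R$ and to pin down $\C^{**}/\C$ by means of the canonical (local) duality functor $(-)^{\dagger}=\Hom_{R}(-,\C)$. Since $\ddeg(R)=\deg(\C^{**}/\C)$ does not depend on the chosen canonical module, and since $\dim R=1$ forces $\C^{**}/\C$ to have finite length, it suffices to prove $\lambda(\C^{**}/\C)=1$. Throughout I would use freely the standard identifications $\Hom_{R}(I,R)\cong R:_{Q}I$ and $\Hom_{R}(I,\C)\cong \C:_{Q}I$ for a regular fractional ideal $I$ (legitimate because every nonzero ideal of the $1$-dimensional Cohen--Macaulay ring $R$ contains a nonzerodivisor, and a canonical ideal is such an ideal), the homothety identity $\Hom_{R}(\C,\C)=R$, the reflexivity $\C:_{Q}(\C:_{Q}I)=I$ (i.e.\ $\Hom_{R}(\Hom_{R}(I,\C),\C)\cong I$ for maximal Cohen--Macaulay $I$; in dimension one every nonzero fractional ideal is maximal Cohen--Macaulay), and local duality in the forms $\Ext^{i}_{R}(M,\C)=0$ for $i>0$ and $M$ maximal Cohen--Macaulay, and $\Ext^{1}_{R}(L,\C)\cong L^{\vee}$, $\Hom_{R}(L,\C)=0$ for $L$ of finite length (Matlis dual $(-)^{\vee}$; one may pass to $\widehat{R}$, which changes none of the lengths).

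First I would normalize. One may assume $R$ is not Gorenstein, the degenerate case $R$ Gorenstein giving $\ddeg(R)=0$. In the defining sequence $0\to R\xrightarrow{f}\omega\to X\to 0$ identify $\omega$ with a fractional canonical ideal; as $f$ is injective, $c_{0}:=f(1)$ is a nonzerodivisor, and replacing $\omega$ by $\C:=c_{0}^{-1}\omega$ we obtain $R\subseteq\C\subseteq Q$ with $X\cong\C/R$. Here $X$ has rank $0$, hence finite length, so $\nu(X)=\e_{0}(X)=\lambda(X)$ forces $\m X=0$, i.e.\ $\m\C\subseteq R$ (and $X\neq 0$ since $R$ is not Gorenstein). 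Consequently $\C^{-1}:=R:_{Q}\C$ is an ideal of $R$ (it lies in $R:_{Q}R=R$) that contains $\m$; thus $\C^{-1}=\m$ or $\C^{-1}=R$, the latter giving $\C^{**}=R:_{Q}\C^{-1}=R$ and hence $\C=R$, which is excluded. So $\C^{-1}=\m$, whence $\C^{**}=R:_{Q}\m=\m^{-1}$. Finally the trace ideal $\tr(\C)=\C\cdot\C^{-1}=\C\m$ satisfies $\m=1\cdot\m\subseteq\C\m\subseteq R$ and $\C\m=\tr(\C)\neq R$ (as $R$ is not Gorenstein), so $\C\m=\m$.

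Next I would compute $\lambda(\m^{-1}/\C)$. Using $\C\m=\m$, $\C:_{Q}\C=R$ and the rule $U:_{Q}(VW)=(U:_{Q}V):_{Q}W$, one gets $\C:_{Q}\m=\C:_{Q}(\C\cdot\C^{-1})=(\C:_{Q}\C):_{Q}\C^{-1}=R:_{Q}\C^{-1}=\m^{-1}$; reflexivity then gives $\C:_{Q}\m^{-1}=\C:_{Q}(\C:_{Q}\m)=\m$. Applying $(-)^{\dagger}$ to $0\to\C\to\m^{-1}\to\m^{-1}/\C\to 0$: one has $\C^{\dagger}=R$, $(\m^{-1})^{\dagger}=\C:_{Q}\m^{-1}=\m$, $(\m^{-1}/\C)^{\dagger}=0$, $\Ext^{1}_{R}(\m^{-1},\C)=0$, and $\Ext^{1}_{R}(\m^{-1}/\C,\C)\cong(\m^{-1}/\C)^{\vee}$, so the long exact sequence collapses to $0\to\m\to R\to(\m^{-1}/\C)^{\vee}\to 0$, the map $\m\to R$ being the inclusion (under the identifications, restriction of the homothety by $q$ is again the homothety by $q$). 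Hence $(\m^{-1}/\C)^{\vee}\cong R/\m$, so $\m^{-1}/\C\cong R/\m$ and $\ddeg(R)=\lambda(\C^{**}/\C)=\lambda(\m^{-1}/\C)=1$. (Equivalently, since $\tr(\C)=\m$, the statement also drops out of the relation between the bi-canonical degree and the trace of the canonical module.)

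I expect the difficulty here to be organizational rather than conceptual: one must keep careful track of the passages between $\Hom$'s and colon ideals and of the nonzerodivisor hypotheses they require, apply the reflexivity $\C:_{Q}(\C:_{Q}(-))=\mathrm{id}$ only to maximal Cohen--Macaulay arguments, and invoke local duality correctly. The one genuinely load-bearing observation is that the almost Gorenstein hypothesis forces $\C^{-1}=\m$ (hence $\C^{**}=\m^{-1}$ and $\tr(\C)=\m$); everything after that is formal, and the answer $1$ materializes exactly as the length of the cokernel of the inclusion $\m\hookrightarrow R$. One should also make sure no circularity enters the chain $\m\C\subseteq R\Rightarrow\C^{-1}=\m\Rightarrow\C\m=\tr(\C)=\m$, and dispose of the Gorenstein case at the outset.
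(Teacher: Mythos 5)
Your proof is correct and follows essentially the route of the cited source for this result (the present paper does not reprove it, referring to \cite{bideg}): normalize $\omega$ to a fractional ideal $R\subseteq\C\subseteq Q$, use $\nu(X)=\e_0(X)=\lambda(X)$ to force $\m\C\subseteq R$, deduce $R:_Q\C=\m$ and $\tr(\C)=\m$, and conclude $\lambda(\C^{**}/\C)=1$ --- your explicit canonical-duality computation of $\lambda(\m^{-1}/\C)$ is just an unwound form of the trace identity $\ddeg(R)=\lambda(R/\tr(\C))$ recalled before Proposition~\ref{trace}. The one point worth flagging is that with the definition of almost Gorenstein adopted here (which includes Gorenstein rings, for which $\ddeg(R)=0$) the statement tacitly requires $R$ not Gorenstein; you correctly isolate and dispose of that case at the outset.
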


The converse of  Proposition~\ref{bidegag} does not hold true, as shown in the following example. 

\begin{Example}{\rm \cite[Example 4.3]{bideg}} {\rm   Consider the monomial ring  $R = \mathbb{Q}[t^5, t^7, t^9]$. 
We have a presentation $R  \simeq \mathbb{Q}[X, Y, Z]/P $, with $P = (Y^2-XZ, X^5-YZ^2, Z^3-X^4 Y)$.  We denote the images of $X, Y, Z$ in $R$   by $x, y, z$ respectively. The type of $R$ is $r=2$.
\begin{enumerate}[(1)]
\item The ideal $\C=(x, y)$ is the canonical ideal of $R$. Since $(x)$ is a minimal reduction of $\C$, we get  $\cdeg(R) = \lambda(\C/(x))  = 2$. 
\item By a Macaulay2 calculation \cite{Macaulay2}, $\C^{**} = (x):  ((x):\C)$ satisfies $\lambda(\C^{**}/\C) = 1$. Thus, $\ddeg(R) = 1$.
\item  By Proposition~\ref{1dimalmostg},  $R$ is not almost Gorenstein.  Hence, $\ddeg(R) = 1$ holds  in a larger class of  rings than almost Gorenstein rings.
\end{enumerate}
}\end{Example}

\begin{Proposition}\label{bidegaug}{\rm \cite[Proposition 5.1]{bideg} (Cf. Theorem~\ref{augmented}) } Let $(R, \m)$ be a $1$-dimensional Cohen-Macaulay local ring with a canonical ideal. Suppose that $R$ is not Gorenstein. Then
\[ \ddeg(R\ltimes \m) = 2\, \ddeg (R) -1.\]
\end{Proposition}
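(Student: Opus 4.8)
The plan is to compute the bidual $(\C')^{**}$ of a canonical ideal $\C'$ of the idealization $S = R \ltimes \m$ directly, starting from a canonical ideal $\C$ of $R$, and then to relate $\lambda_S((\C')^{**}/\C')$ to $\lambda_R(\C^{**}/\C)$. First I would recall the standard description of the canonical module of $S = R \ltimes \m$: since $R$ is a one-dimensional Cohen--Macaulay local ring, $\omega_S \cong \Hom_R(S, \omega_R)$, and unwinding the $S$-module structure on $\Hom_R(R \oplus \m, \omega_R)$ one finds that $\omega_S$ can be realized as an $S$-ideal whose underlying $R$-module is $\C \oplus (\C :_R \m)$ or, after a suitable choice, $\C \oplus \C$ with a twisted $S$-action (this is exactly the computation underlying Theorem~\ref{augmented}(1), where the factor $2$ appears); I would lift the relevant part of that computation from \cite{blue1} or \cite{GMP11} rather than redo it. The key structural fact to extract is that, as an $R$-module, a canonical ideal $\C'$ of $S$ sits inside $S = R \oplus \m$ in such a way that the $\m$-primary localization data needed for $\ddeg$ is governed by two copies of the corresponding data for $\C \subset R$.

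Next I would compute the bidual. The point is that $(-)^{**} = \Hom_S(\Hom_S(-, S), S)$ for an $S$-module can be analyzed through the total ring of fractions: $\C'^{**}$ is the intersection of all principal fractional $S$-ideals containing $\C'$, equivalently $\C'^{**} = \bigcap \{ xS : x \in Q(S),\ \C' \subseteq xS\}$, and since $S$ is one-dimensional Cohen--Macaulay this is the "$S_2$-ification" of $\C'$ inside $Q(S)$. I would then use that $Q(S) = Q(R) \ltimes (\m \otimes_R Q(R)) = Q(R) \ltimes Q(R)$ when $R$ is not a DVR (so $\m$ has positive rank—actually rank one—over $Q(R)$), and track how the bidual operation on the $S$-ideal decomposes. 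The expected outcome is a formula of the shape $\lambda_S(\C'^{**}/\C') = 2\lambda_R(\C^{**}/\C) + (\text{correction})$, where the correction term comes from the "diagonal" part of the idealization and should evaluate to $-1$; the hypothesis that $R$ is not Gorenstein guarantees $\C$ is not principal, which is what makes the correction term behave uniformly (in the Gorenstein case the formula would degenerate since $\ddeg(R) = 0$ but $\ddeg(S)$ need not be $-1$). Since $\dim S = 1$ and $S$ is local, $\ddeg(S) = \lambda_S(\C'^{**}/\C')$ with no sum over primes, which streamlines the bookkeeping.

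The main obstacle I anticipate is the precise identification of the $S$-module structure on $\C'^{**}$ and showing that the bidual does not "see" more than two copies of $\C^{**}$ plus the predicted defect: one must rule out extra length contributions coming from the nilpotent direction of the idealization, i.e. from elements of $Q(R) \ltimes Q(R)$ whose second coordinate interacts with $\m$. Concretely, the delicate step is proving that $(\C \oplus \C)^{**}$, computed inside $S$, equals $\C^{**} \oplus \C^{**}$ as $R$-modules up to a codimension-zero adjustment accounting for the single unit of $S$ — this is where the $-1$ enters, analogous to how the "$+2$" in the canonical-degree idealization formula becomes "$-1$" for the bi-canonical degree because biduality is a dual (contravariant) construction. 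A cleaner alternative route, which I would attempt in parallel, is to use the exact sequence $0 \to \C' \to \C'^{**} \to \B' \to 0$ together with the fact that $\B'$ is independent of the choice of canonical ideal, and to compute $\B'$ from the known exact sequence $0 \to R \to \C' \to (\text{module}) \to 0$ for $S$; comparing $\B_S$ with $\B_R$ via the functorial behavior of $\Ext^1_{(-)}((-)^*, -)$ under the ring map $R \to S$ may give the identity $\deg(\B_S) = 2\deg(\B_R) - 1$ more transparently, with the $-1$ emerging from $\Ext$ of the unit summand.
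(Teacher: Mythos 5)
Note first that this paper does not prove Proposition~\ref{bidegaug}: it is quoted verbatim from \cite[Proposition 5.1]{bideg}, so there is no in-text proof to compare against. Judged on its own terms, your proposal is a strategy outline rather than a proof, and the gap sits exactly at the heart of the statement: the value $-1$ is never derived. You write that the correction term ``should evaluate to $-1$,'' justified by an analogy with the $+2$ in Theorem~\ref{augmented} and the slogan that ``biduality is contravariant.'' That is not an argument; the entire content of the proposition is this constant, and nothing in your outline pins it down. Relatedly, you flag that the hypothesis that $R$ is not Gorenstein ``makes the correction term behave uniformly,'' but you never locate the step of the computation where it is used --- and it must be used somewhere essential, since for $R$ Gorenstein the right-hand side would be $-1<0$.

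There are also two structural inaccuracies that would derail the computation as described. First, the canonical module of $S=R\ltimes\m$ is $\Hom_R(S,\C)\cong(\C:_{Q(R)}\m)\times\C$ with a twisted $S$-action (this is exactly what the paper records in the Question closing Section~\ref{notideal}); it is not ``$\C\oplus\C$ after a suitable choice,'' and the two summands contribute differently to the length count --- this asymmetry is precisely where the defect comes from. Second, in a one-dimensional Cohen--Macaulay local ring the bidual of an ideal containing a non-zerodivisor is \emph{not} its $S_2$-ification: such an ideal is already maximal Cohen--Macaulay, yet it is rarely reflexive (the canonical ideal is reflexive only when $R$ is Gorenstein), so that identification would collapse the very quantity you are trying to measure. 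A route that actually closes the computation in dimension one is the identity $\ddeg(R)=\lambda(R/\tr(\omega_R))$ recalled in Section~\ref{notideal} (from \cite[Proposition 2.3]{bideg}), reducing the problem to computing $\tr(\omega_S)$ inside $S=R\ltimes\m$ from the explicit description $\omega_S\cong(\C:\m)\times\C$; there the factor $2$ and the $-1$ both come out of an honest length count, with the non-Gorenstein hypothesis guaranteeing $\tr(\omega_R)\subseteq\m$ so that the two coordinates of $\tr(\omega_S)$ interact as claimed.
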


\begin{Proposition}\label{preaGor}{\rm \cite[Proposition 6.1]{bideg} }
Let $R= k[t^a, t^b, t^c]$ be a monomial ring. Let
\[ \varphi = \left[ \begin{array}{ll}
X^{a_1} & Y^{b_2} \\ Y^{b_1} & Z^{c_2} \\ Z^{c_1} & X^{a_2}
\end{array}
\right] \] be the Herzog matrix such that $R \simeq k[X, Y, Z]/I_{2}(\varphi)$. 
Suppose that $a_1\leq a_2$, $b_{2} \leq b_{1}$, and $c_{1} \leq c_{2}$. Then ${\ds \ddeg(R)=  a_{1} b_{2} c_{1} }$. 
\end{Proposition}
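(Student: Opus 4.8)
The plan is to work in the ring $S = k[X,Y,Z]$ with $R = S/I_2(\varphi)$, and compute $\ddeg(R) = \deg(\C^{**}/\C)$ directly from an explicit free presentation of the canonical ideal $\C$, exploiting that $R$ is a codimension-two Cohen-Macaulay quotient (a \emph{near-complete intersection} of the Herzog type) so that $\C$ is presented by essentially $\varphi$ or its transpose. First I would recall that for a monomial ring $R = k[t^a,t^b,t^c]$ with defining matrix $\varphi$ as above, the canonical module $\omega_R$ has a presentation whose matrix is (up to transpose and sign conventions) $\varphi$ itself, via the Hilbert--Burch theorem: $I_2(\varphi)$ is resolved by $0 \to S^2 \xrightarrow{\varphi} S^3 \to S \to R \to 0$, and $\omega_R \simeq \coker(\varphi^{\mathrm{t}})$ up to a shift. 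Pinning down which $2\times 3$ submatrix / which ideal $\C \subseteq R$ represents $\omega_R$ as an \emph{ideal} is the first concrete task; one expects $\C$ to be generated by two of the three maximal minors, or by a pair like $(x^{a_1}, z^{c_1})$ read off from a column, and the hypotheses $a_1 \le a_2$, $b_2 \le b_1$, $c_1 \le c_2$ are exactly what is needed to make a canonical choice of reduction and of the embedding $\C \hookrightarrow R$.

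Next I would compute $\C^{**}$. Since $R$ is one-dimensional Cohen-Macaulay with canonical ideal, for any $\m$-primary ideal the bidual $\C^{**}$ equals $\Hom_R(\Hom_R(\C,R),R)$, and by the standard identification (used already in the cited Example with $\C^{**} = (a):((a):\C)$ for a minimal reduction $(a)$) one has $\C^{**} = (a) :_R ((a):_R \C)$ where $(a)$ is the minimal reduction picked out by the inequalities — here $a$ should be the monomial $x^{a_1}$ or the appropriate corner entry. The length $\lambda(\C^{**}/\C)$ is then a purely combinatorial computation with monomial ideals in $R$: one lifts everything to numerical-semigroup language, where $R$ corresponds to the semigroup $\langle a,b,c\rangle$, the ideal $\C$ to a relative ideal (a subset of $\ZZ$ closed under adding semigroup elements), and $\lambda(\C^{**}/\C)$ becomes the number of gaps in a certain finite set of integers. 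I would carry this out by writing the exponents $a_1,a_2,b_1,b_2,c_1,c_2$ in terms of $a,b,c$ (the syzygies of the semigroup give relations such as $a_1 a = b_1 b + \text{(something)}$, etc.), and then count.

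The main obstacle — and the step I would spend the most care on — is the bookkeeping that turns the length $\lambda(\C^{**}/\C)$ into the clean product $a_1 b_2 c_1$. This is where the three inequalities must all get used: they force the relevant ``staircase'' region in the semigroup picture to be a genuine rectangular box of dimensions $a_1 \times b_2 \times c_1$ in suitable coordinates (or, more precisely, its integer points fill out a set of that cardinality), rather than a truncated or overlapping region. I would structure this as: (i) identify the three ``defining'' relations of the semigroup from $\varphi$; (ii) express $\C$, $(a)$, and $(a):\C$ as relative ideals / fractional ideals generated by explicit monomials; (iii) show $\C^{**}/\C$ is spanned by the classes of a set of monomials indexed by a box $\{0,\dots,a_1-1\}\times\{0,\dots,b_2-1\}\times\{0,\dots,c_1-1\}$, using the minimality inequalities to rule out coincidences among these classes and to rule out extra elements. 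Once the indexing set is shown to have exactly $a_1 b_2 c_1$ elements and to biject with a $k$-basis of $\C^{**}/\C$, we get $\deg(\C^{**}/\C) = \lambda(\C^{**}/\C) = a_1 b_2 c_1$, which is the claim. A useful sanity check along the way is to test against the earlier example $R = \mathbb{Q}[t^5,t^7,t^9]$ with its matrix $\varphi$, where the formula should reproduce $\ddeg(R) = 1$, forcing the corresponding $a_1 b_2 c_1 = 1$ and thereby confirming the normalization of which entries play the roles of $a_1, b_2, c_1$.
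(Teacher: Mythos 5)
Your write-up is a strategy outline rather than a proof, and the gap sits exactly where the theorem lives: the identification of $\lambda(\C^{**}/\C)$ with the product $a_1b_2c_1$ is deferred to ``bookkeeping'' in your step (iii), with no argument for why the relevant set of semigroup elements is a box of that size or for how the three inequalities enter. Everything you do spell out (Hilbert--Burch, $\omega_R\simeq\coker(\varphi^{T}\otimes R)$, $\C^{**}=(a):((a):\C)$) is standard setup that produces no number. Two further inaccuracies: the canonical ideal is read off from a \emph{row}, not a column --- sending the two generators $w_1,w_2$ of $\coker(\varphi^{T}\otimes R)$ to $y^{b_2},-x^{a_1}$ kills all three relations and is injective because a nonzero map from a rank-one torsion-free module to the domain $R$ has torsion kernel, so $\C=(x^{a_1},y^{b_2})$; and your sanity check on $\mathbb{Q}[t^5,t^7,t^9]$ is not the routine confirmation you suggest, since with the natural labelling that example has $a_1=1,\ b_1=b_2=1,\ c_1=2,\ c_2=1$, which violates $c_1\le c_2$ --- the proposition applies only after permuting variables, so the hypotheses are genuinely normalizing the labels to $\min(a_1,a_2)\min(b_1,b_2)\min(c_1,c_2)$ and your computation must use them, not merely mention them.

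For comparison, the route taken in the source \cite{bideg} (and already signposted in Section~\ref{notideal} of this paper) avoids the colon-ideal computation entirely: in dimension one $\ddeg(R)=\tdeg(R)=\lambda(R/\tr(\omega))$ \cite[Proposition 2.3]{bideg}, and for these rings $\tr(\omega_R)=I_1(\varphi)R=(x^{a_1},y^{b_2},z^{c_1})$. The containment $\supseteq$ is elementary: the three rows of $\varphi$ give three maps $\omega_R\to R$ with images $(x^{a_1},y^{b_2})$, $(y^{b_1},z^{c_2})$, $(z^{c_1},x^{a_2})$, and the hypotheses $a_1\le a_2$, $b_2\le b_1$, $c_1\le c_2$ collapse the sum of these images to $(x^{a_1},y^{b_2},z^{c_1})$; the reverse containment is the computation of $\omega_R^{*}$ in \cite{HHS19}. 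Since every $2\times 2$ minor of $\varphi$ is a difference of products of two entries, $I_2(\varphi)\subseteq(X^{a_1},Y^{b_2},Z^{c_1})$, whence $R/\tr(\omega_R)\simeq k[X,Y,Z]/(X^{a_1},Y^{b_2},Z^{c_1})$, a complete intersection of length $a_1b_2c_1$. If you insist on your direct approach, you must actually exhibit $(a):\C$ and $(a):((a):\C)$ as explicit relative ideals of the semigroup and count; that is the step your outline skips.
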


By \cite[Theorem 4.1]{GMP11}, the ring $R$ given in Proposition~\ref{preaGor} has the canonical degree $\cdeg(R) = a_{1}b_{1}c_{1}$ or $\cdeg(R) = a_{2}b_{2}c_{2}$. This supports  the following Comparison Conjecture \cite[Conjecture 3.2]{bideg}. 

 \begin{Conjecture}\label{cdegvsfddeg} {\rm 
In general,   $\cdeg(R) \geq \ddeg(R)$. 
}\end{Conjecture}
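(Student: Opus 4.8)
The plan is to reduce the inequality to the one-dimensional local case, where both degrees become lengths, and then to attempt a comparison by canonical duality; I should flag at the outset that it is precisely this last comparison that keeps the statement a conjecture rather than a theorem.

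The first step is rigorous and uses additivity. By Definition~\ref{defcdeg} and Definition~\ref{bideg}, both $\cdeg(R)$ and $\ddeg(R)$ are sums of the shape $\sum_{\h(\p)=1} c_{\p}\deg(R/\p)$ over the height-one primes, with the same nonnegative weights $\deg(R/\p)$ and with local terms $c_{\p}=\cdeg(R_{\p})$ in one case and $c_{\p}=\ddeg(R_{\p})$ in the other. Hence $\cdeg(R)\ge \ddeg(R)$ would follow termwise from $\cdeg(R_{\p})\ge \ddeg(R_{\p})$ for every height-one prime $\p$. Since each $R_{\p}$ is a one-dimensional Cohen--Macaulay local ring whose canonical ideal $\C_{\p}$ is primary, the whole conjecture reduces to the following: for a one-dimensional Cohen--Macaulay local ring $(R,\m)$ with $\m$-primary canonical ideal $\C$ (enlarging the residue field if necessary), prove $\cdeg(R)\ge \ddeg(R)$.

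The second step puts both sides in a common length form. Fixing a minimal reduction $(a)$ of $\C$ gives the chain $(a)\subseteq \C\subseteq \C^{**}$, together with $\cdeg(R)=\lambda(\C/(a))$ and $\ddeg(R)=\lambda(\C^{**}/\C)$, so the target collapses to the single inequality
\[ \lambda(\C/(a)) \ \ge\ \lambda(\C^{**}/\C), \]
comparing the ``gap below'' $\C$ against the ``gap above'' $\C$. The idea is to realize $\C^{**}$ through the linkage description $\C^{**}=(a):_R((a):_R\C)$ and to exploit that $\C$ is self-dual as the canonical module. Passing to the Artinian quotient $\bar R=R/(a)$, whose canonical module is $\bar\omega=\C/a\C$, Matlis duality $\Hom_{\bar R}(-,\bar\omega)$ is exact and length-preserving; I would try to identify $\C/(a)$ and $\C^{**}/\C$ as sub- or quotient modules of the duals of $R/\C$ and of $\C^{**}/\C$ itself, and then manufacture an explicit length-non-increasing map from the upper gap into the lower gap. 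A parallel route uses Proposition~\ref{trace}: there $\ddeg(R)$ is governed by the trace $\tr(\C)$, and in dimension one one expects $\ddeg(R)=\lambda(R/\tr(\C))$, so the goal becomes $\lambda(\C/(a))\ge \lambda(R/\tr(\C))$, attackable by comparing the filtrations of $R/(a)$ cut out by $\C$ and by $\tr(\C)$.

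The hard part — and the reason for the conjectural status — is exactly this last comparison: there is no evident natural map realizing $\ddeg(R)\le \cdeg(R)$, since the two invariants measure genuinely different deviations (distance from a reduction versus distance to the bidual), and the known confirmations are computational rather than structural. The safest concrete target is therefore to settle the monomial-curve case $R=k[t^{a},t^{b},t^{c}]$ in full generality: by Proposition~\ref{preaGor} one has $\ddeg(R)=a_{1}b_{2}c_{1}$ while $\cdeg(R)\in\{a_{1}b_{1}c_{1},\,a_{2}b_{2}c_{2}\}$, and the orderings $b_{2}\le b_{1}$, $a_{1}\le a_{2}$, $c_{1}\le c_{2}$ already force $\ddeg(R)\le\cdeg(R)$ in that family. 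Extending the explicit computation of $\C^{**}$ by double linkage to arbitrary orderings, and then to broader classes where $(a):((a):\C)$ is tractable, is the natural way to accumulate enough structural evidence to isolate the mechanism behind the general inequality.
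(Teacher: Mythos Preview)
The statement you are addressing is presented in the paper as an open \emph{conjecture}, not as a theorem: the paper offers no proof, only the supporting evidence of Proposition~\ref{preaGor} and the remark that in the monomial case the known formulas for $\cdeg(R)$ and $\ddeg(R)$ are compatible with the inequality. There is therefore no ``paper's own proof'' to compare your proposal against.

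That said, your write-up is an honest and essentially accurate account of where the problem stands. The reduction to dimension one via the height-one decomposition in Definitions~\ref{defcdeg} and~\ref{bideg} is correct, as is the reformulation of the one-dimensional target as $\lambda(\C/(a))\ge\lambda(\C^{**}/\C)$ for a minimal reduction $(a)$ of the canonical ideal (modulo the standard but not entirely trivial check that both degrees are stable under the residue-field extension needed to guarantee such a reduction exists). Your verification in the monomial-curve family is exactly the evidence the paper itself cites. Where your proposal remains a sketch rather than a proof is precisely where you say it does: no natural map or duality argument is produced that would force $\C^{**}/\C$ to be dominated in length by $\C/(a)$, and the Matlis-duality and trace heuristics you outline do not, as written, yield such a map. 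So you have correctly localized the difficulty but not resolved it, which is consistent with the conjectural status of the statement.
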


We close the section with a list of questions about change of rings posed in \cite[6.4 and 6.12]{blue1} and \cite[5.4 and 5.5]{bideg}. 

\begin{Questions}{\rm  Let $(R, \m)$ be a Cohen--Macaulay local ring with a canonical ideal $\C$.

\begin{enumerate}[(1)]

\item Is it true that $\cdeg(R)= \cdeg(R \bl X \br)$? If $\C$ is equimultiple, then it is true. What if $\C$ is not equimultiple?

\item  In \cite[Proposition 6.11]{blue1}  we showed that if $R$ has dimension $d \geq 2$, $\C$ is equimultiple and  $x$ is regular modulo $\C$, then
$\cdeg(R) \leq \cdeg(R/(x)).$  More generally we ask:
when does there exist an $x$ such that $\cdeg(R) = \cdeg(R/(x))$?

\item In accordance with Conjecture~\ref{cdegvsfddeg} we ask: if $\C$ is equimultiple and $x$ is regular modulo $\C$, is $ \ddeg(R) \geq \ddeg(R/(x))$?

\item  How do we relate $\ddeg(A)$ of a graded algebra $A$ to $\ddeg(B)$ of one of its Veronese subalgebras?
If $S$ is a finite injective extension of $R$, is $\ddeg(S) \leq [S:R]  \cdot \ddeg(R)$?

\end{enumerate}
}\end{Questions}

\medskip

\section{Bi-canonical Degrees of Rings with Canonical Modules}\label{notideal}

We propose an extension of the bi-canonical degree to a Cohen--Macaulay local ring $(R, \m)$ whose canonical module $\omega$ is not necessarily an ideal. Let $k$ be the residue field $R/\m$. Recall that a finitely generated $R$--module $\omega$ is called a canonical module of $R$ if
\[ \Ext^{i}_{R} (k, \omega) \simeq \left\{ \begin{array}{ll}
0 \quad & \mbox{if} \;\; i \neq  \dim(R), \vspace{0.1 in} \\
k & \mbox{if} \;\; i = \dim(R).
\end{array}    \right. \]
In particular, if $\dim(R)=0$, then the injective hull of $k$ is the canonical module of $R$ \cite[Proposition 3.1.14, Lemma 3.2.7]{BH}.

\medskip

For an $R$--module $E$, we denote the dual of $E$ by  ${\ds E^{*} = \Hom_{R}(E, R)}$.
By dualizing the finite free presentation of $\omega$,
\[  F_{1} \stackrel{\varphi}{\lar} F_{0} \rar \omega \rar 0,\] 
we obtain the following exact sequence
\[ 0 \rar \omega^{*} \rar F_{0}^{*} \stackrel{\varphi^{*}}{\lar} F_{1}^{*} \rar D(\omega) \rar 0,\]
where ${\ds D(\omega) = \coker(\varphi^{*})}$ is the Auslander dual of $\omega$. 
The module $D(\omega)$ depends on the chosen presentation but the values of ${\ds \Ext_{R}^i(D(\omega), R)}$, for $i\geq 1$, are independent of the presentation. By \cite[Proposition 2.6]{AusBr}, there exists an exact sequence 
\[ 0 \lar \Ext_{R}^1(D(\omega), R) \lar \omega  \lar  \omega^{**} \lar \Ext_{R}^2(D(\omega), R) \lar 0. \]

\begin{Definition}\label{bidegext}{\rm
Let $(R, \m)$ be a Cohen--Macaulay local ring with a canonical module $\omega$. The {\em bi-canonical degree} of $R$ is 
\[ \ddeg(R) = \deg( \Ext_{R}^1(D(\omega), R) ) + \deg( \Ext_{R}^2(D(\omega), R) ),\]
where ${\ds D(\omega) }$ is the Auslander dual of $\omega$ and $\deg(\tratto)$ is the multiplicity associated with $\m$--adic filtration.
}\end{Definition}

An $R$--module $E$ is said to be {\em torsionless} if the natural homomorphism ${\ds \sigma: E \rar E^{**}}$ is injective. Recall that a finite $R$--module $E$ is torsionless if and only if it is a submodule of a finite free module.  The following proposition shows that if $R$ possesses a canonical ideal $\C$, then Definition~ \ref{bideg} and Definition~\ref{bidegext} coincide.

\begin{Proposition}\label{torsionless}
Let $(R, \m)$ be a Cohen--Macaulay local ring with a canonical module $\omega$. Then $\omega$ is torsionless if and only if $\omega$ is isomorphic to an ideal of $R$.
\end{Proposition}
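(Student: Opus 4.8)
The plan is to prove the two implications separately, using the characterization of torsionless modules as submodules of finite free modules (quoted just before the statement).

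First suppose $\omega$ is isomorphic to an ideal $\C$ of $R$. Then $\C \subseteq R$ embeds in the free module $R$, so $\C$ is torsionless, and hence so is $\omega$. This direction is immediate and requires no hypothesis beyond the definition.

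For the converse, suppose $\omega$ is torsionless. Then by the cited fact $\omega$ embeds in a finite free module $R^n$, say via $\iota\colon \omega \hookrightarrow R^n$. The goal is to produce a single coordinate projection $\pi_j\colon R^n \to R$ such that $\pi_j \circ \iota$ is still injective; then $\omega$ is isomorphic to the ideal $\image(\pi_j\circ\iota)\subseteq R$. To find such a $j$, I would argue with associated primes: since $R$ is Cohen-Macaulay (hence unmixed), $\Ass(R)$ is the finite set of minimal primes, all of the same dimension $d=\dim R$. Because $\omega$ is a canonical module of a Cohen-Macaulay ring, it is a maximal Cohen-Macaulay module and is faithful, so $\Ass(\omega)=\Ass(R)=\{\p_1,\dots,\p_s\}$, the minimal primes of $R$. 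Localizing $\iota$ at each $\p_i$: since $R_{\p_i}$ is Artinian and $\omega_{\p_i}$ is the canonical module of $R_{\p_i}$, and over an Artinian local ring the canonical module is the injective hull of the residue field, the map $\iota_{\p_i}\colon \omega_{\p_i}\hookrightarrow R_{\p_i}^n$ is an embedding of a module whose socle is simple. Hence the image meets some coordinate $R_{\p_i}$ nontrivially in the socle — more precisely, the composite $\omega_{\p_i}\to R_{\p_i}^n \to R_{\p_i}/\p_iR_{\p_i}$-socle is nonzero for at least one coordinate, so there is a coordinate $j(i)$ with $\ker(\pi_{j(i)}\circ\iota)_{\p_i}$ not containing the socle of $\omega_{\p_i}$; since $\omega_{\p_i}$ is an essential extension of its socle, in fact $(\pi_{j(i)}\circ\iota)_{\p_i}$ is injective. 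Using prime avoidance on the finitely many minimal primes, I would replace the fixed coordinate maps by a suitable generic $R$-linear combination $\pi = \sum_j a_j\pi_j$ (here one may use that $R/\m$ is infinite, or pass to $R[X]$ localized, or simply observe the set of "bad" primes for each candidate combination is closed and argue the good combinations are not contained in any $\p_i$): for a general choice of $(a_1,\dots,a_n)$, the map $(\pi\circ\iota)_{\p_i}$ is injective for every $i$. A homomorphism of finite $R$-modules whose localization at every associated prime of the source is injective is itself injective (its kernel has no associated primes among $\Ass(\omega)$, but $\Ass(\ker)\subseteq\Ass(\omega)$, forcing $\ker=0$). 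Therefore $\pi\circ\iota\colon\omega\hookrightarrow R$ realizes $\omega$ as an ideal.

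The main obstacle is the generic-projection step: one needs to pass from "for each minimal prime $\p_i$ there is some coordinate combination injective at $\p_i$" to "one combination works simultaneously for all $i$." The cleanest route is to invoke that the residue field is infinite (which can be arranged by the standard faithfully flat base change $R\to R[X]_{\m R[X]}$, under which torsionlessness and the canonical-module property are preserved and an ideal descends), and then use prime avoidance / a Zariski-density argument over the vector space of coordinate functionals. The verification that "injective at every associated prime of the source" implies "injective" is the standard associated-primes argument and is routine. I would also remark, for the application to Definition~\ref{bidegext} versus Definition~\ref{bideg}, that once $\omega\cong\C$ is an ideal, the exact sequence $0\to\Ext^1_R(D(\omega),R)\to\omega\to\omega^{**}\to\Ext^2_R(D(\omega),R)\to 0$ recorded above has $\Ext^1_R(D(\omega),R)=0$ (as $\omega\cong\C$ is torsionless, $\sigma$ is injective) and $\Ext^2_R(D(\omega),R)\cong\omega^{**}/\omega\cong\C^{**}/\C$, so $\ddeg(R)=\deg(\C^{**}/\C)$, matching Definition~\ref{bideg}.
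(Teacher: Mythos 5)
Your proof is correct in both directions, but for the hard implication you take a genuinely different route from the paper. Both arguments begin identically: localize the embedding $\omega\hookrightarrow R^n$ at a minimal prime $\p$ and use that $\omega_{\p}\simeq E(k(\p))$ over the Artinian local ring $R_{\p}$. The paper then simply observes that the injective module $\omega_{\p}$ splits off $R_{\p}^n$, hence is free, hence (being indecomposable) is $R_{\p}$ itself; this shows $R$ is generically Gorenstein, and \cite[Proposition 3.3.18]{BH} immediately identifies $\omega$ with an ideal. You instead build the embedding $\omega\hookrightarrow R$ by hand via a generic coordinate projection: your local step (nonvanishing on the simple socle plus essentiality forces injectivity at each $\p_i$) is sound, as is the passage from ``injective at every associated prime of the source'' to ``injective,'' and the prime-avoidance over the finitely many proper submodules of $R^n$ works once the residue field is infinite. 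What your route buys is an explicit construction of the ideal without invoking the generically-Gorenstein criterion; what it costs is the base change $R\to R[X]_{\m R[X]}$, and this is the one soft spot: the assertion that ``an ideal descends'' is exactly the nontrivial part, and the natural justification (the minimal primes of $R[X]_{\m R[X]}$ lie over those of $R$, Gorensteinness descends along faithfully flat maps, and generically Gorenstein is equivalent to $\omega$ being an ideal) routes you straight back through \cite[Proposition 3.3.18]{BH} --- at which point the paper's two-line finish is already available and the generic-projection machinery is unnecessary. A cleaner way to finish your construction without the residue-field reduction is the standard one: since $\omega_{\p}\simeq R_{\p}$ for every minimal prime, $\omega$ embeds in its localization at the complement of the minimal primes, i.e.\ in $Q(R)\simeq \omega\otimes Q(R)$, and clearing denominators lands $\omega$ inside $R$. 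Your closing remark reconciling Definition~\ref{bidegext} with Definition~\ref{bideg} via the Auslander--Bridger exact sequence is correct and worth recording, as the paper states the compatibility but does not write it out.
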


\begin{proof} ($\Leftarrow$) is clear.  ($\Rightarrow$) Suppose $\omega$ is torsionless. Then there exists a finite free $R$--module $F$ and an exact sequence ${\ds 0 \rar \omega \rar F}$.  Let ${\ds \p}$ be a minimal prime of $R$. Then ${\ds \omega_{\p}}$ is the canonical module of ${\ds R_{\p}}$. 
Since ${\ds R_{\p}}$ is Artinian, ${\ds \omega_{\p}}$ is isomorphic to the injective hull ${\ds E(k(\p))}$, where ${\ds k(\p)= R_{\p}/\p R_{\p}}$. In particular, $\omega_{\p}$ is injective. 
Then the exact sequence ${\ds 0 \rar \omega_{\p} \rar F_{\p}}$ splits. Hence ${\ds \omega_{\p}}$ is a finite free $R_{\p}$--module. Moreover, as the injective hull ${\ds E(k(\p))}$,  ${\ds \omega_{\p}}$ is indecomposable. Thus, $\omega_{\p} \simeq R_{\p}$. Hence $R_{\p}$ is Gorenstein. By \cite[Proposition 3.3.18]{BH}, $\omega$ can be identified with an ideal in $R$.
\end{proof}

\begin{Proposition}\label{vanishingbideg}{\rm (Cf. Proposition~\ref{bidegvan})}
Let $(R, \m)$ be a Cohen-Macaulay local ring with a canonical module $\omega$. Then $\ddeg(R)=0$ if and only if $R$ is Gorenstein in codimension $1$.
\end{Proposition}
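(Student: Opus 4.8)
The plan is to reduce the statement to a localization computation at height-one primes, exactly as in the ideal case (Proposition~\ref{bidegvan}), using the exact sequence
\[ 0 \lar \Ext_{R}^1(D(\omega), R) \lar \omega  \lar  \omega^{**} \lar \Ext_{R}^2(D(\omega), R) \lar 0 \]
recalled just before Definition~\ref{bidegext}. Since $\deg(\tratto)$ is computed from the $\m$-adic filtration and is supported on primes of dimension $d-1$ (equivalently, by Cohen--Macaulayness, height one), we have $\ddeg(R)=0$ if and only if both $\Ext_{R}^1(D(\omega),R)$ and $\Ext_{R}^2(D(\omega),R)$ vanish after localizing at every height-one prime $\p$. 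The first step is therefore to observe that, for each height-one prime $\p$, localizing the above four-term sequence gives
\[ 0 \lar \Ext_{R_\p}^1(D(\omega)_\p, R_\p) \lar \omega_\p  \lar  (\omega_\p)^{**} \lar \Ext_{R_\p}^2(D(\omega)_\p, R_\p) \lar 0, \]
because $D(\omega)$ and $\Ext$ commute with localization and $D(\omega)_\p$ is (a module whose relevant $\Ext$'s agree with) the Auslander dual of $\omega_\p$ over $R_\p$.

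Next I would show the local equivalence: for a one-dimensional Cohen--Macaulay local ring $R_\p$ with canonical module $\omega_\p$, the two $\Ext$ modules above vanish if and only if $R_\p$ is Gorenstein. The $(\Leftarrow)$ direction is immediate: if $R_\p$ is Gorenstein then $\omega_\p \simeq R_\p$ is free, hence reflexive, so the natural map $\omega_\p \to (\omega_\p)^{**}$ is an isomorphism and both outer terms of the sequence vanish. For $(\Rightarrow)$, suppose both $\Ext$'s vanish. Then $\omega_\p \to (\omega_\p)^{**}$ is an isomorphism, i.e.\ $\omega_\p$ is reflexive, in particular torsionless. By the argument in the proof of Proposition~\ref{torsionless} applied to the ring $R_\p$ (or directly: over the one-dimensional Cohen--Macaulay local ring $R_\p$, a reflexive canonical module has positive depth and is torsionless), $\omega_\p$ is isomorphic to an ideal of $R_\p$; and a reflexive ideal of a one-dimensional local ring which is a canonical module forces $R_\p$ to be Gorenstein, since a canonical ideal $\C$ satisfies $\C^{**}/\C \simeq$ the obstruction to $R_\p$ being Gorenstein in codimension one, and in dimension one "Gorenstein in codimension one" is "Gorenstein". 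Alternatively one invokes directly that $\omega_\p$ being free (it is a torsionless, and by depth considerations even reflexive, canonical module over the Artinian-modulo-a-regular-element ring) gives $\omega_\p \simeq R_\p$.

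Finally I would assemble: $\ddeg(R)=0$ $\iff$ for all height-one $\p$, $\deg_{R_\p}\Ext^1 = \deg_{R_\p}\Ext^2 = 0$ $\iff$ for all such $\p$ these $\Ext$'s vanish (they are finite-length $R_\p$-modules, so degree zero means zero) $\iff$ $R_\p$ is Gorenstein for all height-one $\p$ $\iff$ $R$ is Gorenstein in codimension one. One should also note that $\ddeg(R)\ge 0$ trivially since it is a sum of multiplicities. The main obstacle I anticipate is the clean identification of $D(\omega)_\p$ with the Auslander dual of $\omega_\p$ together with the compatibility of the four-term sequence under localization — this is a standard but slightly fussy point (the Auslander dual depends on the presentation, but its $\Ext^{\ge 1}$ do not, and a localization of a presentation of $\omega$ is a presentation of $\omega_\p$), and getting the bookkeeping right is where care is needed; the one-dimensional local statement "reflexive canonical module $\Rightarrow$ Gorenstein" is then essentially the content of the ideal case already established in Proposition~\ref{bidegvan} and \cite{bideg}.
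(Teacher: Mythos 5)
Your proposal is correct in substance, but it takes a different route from the paper for the key step. The paper's proof is two lines: from the four-term exact sequence
\[ 0 \lar \Ext_{R}^1(D(\omega), R) \lar \omega  \lar  \omega^{**} \lar \Ext_{R}^2(D(\omega), R) \lar 0 \]
it reads off that $\ddeg(R)=0$ if and only if $\omega$ is reflexive, and then simply cites the classical result of Herzog--Kunz \cite[Korollar 7.29]{HK2} that the canonical module is reflexive exactly when $R_\p$ is Gorenstein for all primes $\p$ of height $\le 1$. You instead localize at height-one primes and re-derive that criterion internally: reflexive $\Rightarrow$ torsionless $\Rightarrow$ isomorphic to an ideal (Proposition~\ref{torsionless} applied to $R_\p$), and then a reflexive canonical \emph{ideal} in dimension one forces Gorenstein by the already-established ideal case (Proposition~\ref{bidegvan}). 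What your route buys is self-containedness within the paper's own results and an explicit check that the localization bookkeeping for $D(\omega)$ works (which you correctly identify as the fussy point: the $\Ext^{\ge 1}$ of the Auslander dual are presentation-independent and commute with localization). What the paper's route buys is brevity and a cleaner handling of the global statement: Herzog--Kunz gives reflexivity of $\omega$ itself, rather than only vanishing of the two $\Ext$ modules in codimension one, so one never has to worry about whether ``$\deg=0$'' is being read as ``zero in codimension one'' or ``zero outright.'' That ambiguity in the normalization of $\deg(\tratto)$ is the one place where your argument is slightly more exposed than the paper's: your chain of equivalences identifies $\ddeg(R)=0$ with vanishing of the $\Ext$'s at all height-one primes, whereas the paper identifies it with their vanishing globally; the two readings agree here precisely because the Herzog--Kunz equivalence (or your localized version of it, applied at all primes of height $\le 1$) makes the codimension-one condition imply global reflexivity. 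This is a gap in precision rather than in correctness, and your parenthetical remark that the $\Ext$'s are finite length over $R_\p$ for $\p$ of height one deserves a word of justification (for $\Ext^1$, the kernel of $\omega_\p\to\omega_\p^{**}$ embeds in the maximal Cohen--Macaulay module $\omega_\p$, so finite length forces it to be zero), but the overall argument stands.
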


\begin{proof}  By Definition~\ref{bidegext}, $\ddeg(R)=0$ if and only if $\omega$ is reflexive.  By \cite[Korollar 7.29]{HK2}, this is equivalent to $R_{\p}$ is Gorenstein for all primes $\p$ with height $1$.
\end{proof}

The following proposition shows how the free presentation of the Auslander dual $D(\omega)$ can be explicitly obtained.

\begin{Proposition}\label{bidegcomp}
Let $(S, \n)$ be a regular local ring. Let $I$ be an $S$--ideal of height $g$ such that $R=S/I$ is Cohen-Macaulay. Let 
\[ 0 \rar G_{g} \stackrel{\sigma}{\lar} G_{g-1} \rar \cdots \rar G_{1} \rar G_{0}=S \rar 0,\]
be a minimal free $S$--resolution of $R$.  
\begin{enumerate}[{\rm (1)}]
\item ${\ds  \omega_{R} \simeq \coker(\sigma^{T} \otimes R)}$.
\item ${\ds  D(\omega_{R})  \simeq \coker( \sigma \otimes R) }$.
\item Let ${\ds \varphi= \sigma^{T} \otimes R }$ and ${\ds G_{g}= S^{p}}$. Then $\omega_{R}$ is an ideal if and only if ${\ds \h(I_{p-1}(\varphi) )\geq 1}$ and $I_{p}(\varphi)=0$.
\end{enumerate}
\end{Proposition}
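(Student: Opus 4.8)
The plan is to use the minimal free $S$-resolution of $R$ together with the fact that, since $S$ is regular and $I$ has height $g$ with $R=S/I$ Cohen--Macaulay, the canonical module of $R$ is computed by $\Ext^g_S(R,S)$. First I would establish (1): applying $\Hom_S(-,S)$ to the resolution $0\to G_g\xrightarrow{\sigma}G_{g-1}\to\cdots\to G_0=S\to 0$ and using that $\Ext^i_S(R,S)=0$ for $i\neq g$ (grade sensitivity, plus $R$ Cohen--Macaulay of codimension $g$), we get that $\omega_R=\Ext^g_S(R,S)=\coker(\sigma^T)$ as an $S$-module. Since this cokernel is already an $R$-module, it equals $\coker(\sigma^T\otimes R)$, giving (1). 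The key input here is the identification $\omega_R\cong\Ext^g_S(R,S)$, which is standard for quotients of regular (or Gorenstein) local rings.

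For (2), I would first tensor the tail $G_g\xrightarrow{\sigma}G_{g-1}$ of the resolution with $R$ to obtain a (not necessarily minimal) free presentation of $R$-modules; in fact, by (1) the presentation $G_g^*\otimes R\xrightarrow{\sigma^T\otimes R}G_{g-1}^*\otimes R\to\omega_R\to 0$ computes $\omega_R$. Now dualize this presentation back into $R$: since $(\sigma^T\otimes R)^T=\sigma\otimes R$ (and duals of free $R$-modules are free of the same rank), the Auslander dual is $D(\omega_R)=\coker(\sigma^*\otimes R\colon (G_{g-1}^*)^*\otimes R\to (G_g^*)^*\otimes R)=\coker(\sigma\otimes R)$. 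The only subtlety is bookkeeping with the double dual of free modules and the transpose of a transpose, which is routine; I would state carefully that $\sigma$ here means the matrix of the last map in the resolution, acting $G_g\otimes R\to G_{g-1}\otimes R$.

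For (3), I would invoke Proposition~\ref{torsionless}: $\omega_R$ is isomorphic to an ideal if and only if $\omega_R$ is torsionless, i.e.\ the natural map $\omega_R\to\omega_R^{**}$ is injective. Writing $\varphi=\sigma^T\otimes R$, so that $F_1\xrightarrow{\varphi}F_0\to\omega_R\to 0$ with $F_0=G_{g-1}^*\otimes R$ of rank equal to that of $G_{g-1}$ and $F_1=G_g^*\otimes R=R^p$, torsionlessness of $\coker\varphi$ is governed by the classical criterion of Auslander--Bridger in terms of the grade of the Fitting ideals of the transpose: $\coker(\varphi^T)$ — which is exactly $D(\omega_R)$ up to the identification in (2) — must have grade $\ge 1$ in degree-$1$ $\Ext$ and be zero beyond. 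Concretely $\Ext^1_R(D(\omega_R),R)=0$ forces the torsion submodule of $\omega_R$ to vanish; by the exact sequence $0\to\Ext^1_R(D(\omega_R),R)\to\omega_R\to\omega_R^{**}$ recalled in the text, this is the torsionless condition. Unwinding this via the Buchsbaum--Eisenbud "what makes a complex exact" type analysis of the two-term complex $R^p\xrightarrow{\varphi^T}(\text{free})$ translates into the stated numerical conditions on the Fitting ideals of $\varphi$: the map $\varphi^T$ (equivalently $\sigma\otimes R$) should have "expected" rank $p$ generically, which is the condition $\h(I_{p-1}(\varphi))\ge 1$ together with $I_p(\varphi)=0$ (the latter because $\varphi^T$ cannot have rank $p$ identically, as $\omega_R$ is nonzero, i.e.\ $I_p$ of the target side of the relations vanishes). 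I expect the main obstacle to be getting the index on the Fitting ideal exactly right and matching "rank $p-1$ generically" with "$I_{p-1}(\varphi)$ has positive grade and $I_p(\varphi)=0$"; I would do this by localizing at a height-$1$ prime $\p$, where $R_\p$ is Gorenstein iff $\omega_{R_\p}$ is free of rank $1$ iff the localized presentation matrix $\varphi_\p$ has a unit maximal minor of the appropriate size, and then globalizing via Proposition~\ref{vanishingbideg}/Proposition~\ref{torsionless} and the height condition on $I_{p-1}(\varphi)$.
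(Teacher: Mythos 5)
Your arguments for (1) and (2) are correct and essentially identical to the paper's: dualize the resolution over $S$ to realize $\omega_R=\Ext^g_S(R,S)=\coker(\sigma^T)$, pass to $R$, and then observe that dualizing the resulting $R$-presentation back into $R$ turns $\sigma^T\otimes R$ into $\sigma\otimes R$.

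Part (3), however, has a genuine gap. The conditions $\h(I_{p-1}(\varphi))\ge 1$ and $I_p(\varphi)=0$ say precisely that $\rank(\varphi)=p-1$, equivalently that $\coker(\varphi)=\omega_R$ has rank $1$, equivalently that $\omega_{R,\p}\simeq R_\p$ for every \emph{minimal} prime $\p$ (note that in a Cohen--Macaulay ring, $\h(I_{p-1}(\varphi))\ge 1$ means $I_{p-1}(\varphi)$ avoids all minimal primes, i.e.\ contains a nonzerodivisor). So the relevant dichotomy is \emph{generic} Gorensteinness, i.e.\ codimension $0$, and the clean chain of equivalences is: $\omega_R$ is an ideal iff $\omega_R$ has rank $1$ (\cite[Proposition 3.3.18]{BH}) iff $\rank(\varphi)=p-1$ (\cite[Proposition 1.4.3]{BH}) iff the two Fitting-ideal conditions hold (\cite[Proposition 1.4.11]{BH}). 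Your plan instead localizes at height-$1$ primes and appeals to Proposition~\ref{vanishingbideg}; that proposition characterizes \emph{reflexivity} of $\omega$ (Gorenstein in codimension $1$), which is a strictly stronger condition than $\omega$ being an ideal, so the globalization step as you describe it proves the wrong statement. Similarly, routing the argument through $\Ext^1_R(D(\omega_R),R)=0$ and a Buchsbaum--Eisenbud-type analysis conflates two different things: the Fitting conditions on $\varphi$ characterize rank $1$, not torsionlessness of a general cokernel; the two happen to coincide for the canonical module, but only because of \cite[Proposition 3.3.18]{BH}, which is the fact you would need to cite anyway. Replacing the height-$1$ localization by localization at minimal primes and dropping the detour through $D(\omega_R)$ repairs the argument and reduces it to the paper's.

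Two smaller points: the parenthetical claim that $I_p(\varphi)=0$ is automatic ``because $\omega_R$ is nonzero'' is not justified --- $V(I_p(\varphi))=\supp(\omega_R)=\Spec(R)$ only forces $I_p(\varphi)$ to be nilpotent, not zero --- so this condition must be verified, not assumed. And the target of $\varphi$ is $F_0=\Hom_S(G_g,S)\otimes R\simeq R^p$ (not $G_{g-1}^*\otimes R$); keeping the ranks straight matters for reading off which Fitting ideal is the relevant one.
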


\begin{proof} 
By dualizing with respect to $S$, we obtain a free presentation for the canonical module $\omega_{R}$ of $R$ as an $S$--module \cite[Corollary 3.3.9]{BH}
\[ \Hom_{S}(G_{g-1}, S) \stackrel{\tiny \Hom_{S}(\sigma, S)}{\lar}   \Hom_{S}(G_{g}, S) \rar \omega_{R} \rar 0.\]
By tensoring this with $R=S/I$, we obtain the finite free presentation for $\omega_{R}$ as an $R$--module. That is,
\[ F_{1} \stackrel{\varphi}{\lar} F_{0} \rar \omega_{R} \rar 0,\]
where ${\ds F_{1}= \Hom_{S}(G_{g-1}, S) \otimes_{S} R}$,  ${\ds F_{0}= \Hom_{S}(G_{g}, S) \otimes_{S} R}$, and ${\ds \varphi=\Hom_{S}(\sigma, S) \otimes R}$. In particular,
\[ \omega_{R} \simeq \coker(\Hom_{S}(\sigma, S) \otimes R) = \coker(\sigma^{T} \otimes R). \]
Note that
\[ \varphi^{*} = \Hom_{R}( \varphi, R) = \Hom_{R}(\Hom_{S}(\sigma, S) \otimes R    , R) =  \Hom_{S}(\Hom_{S}(\sigma, S), S) \otimes R = (\sigma^{T})^{T} \otimes R = \sigma \otimes R. \]
Thus, we have
\[ D(\omega_{R}) = \coker(\varphi^{*}) = \coker( \sigma \otimes R).\]
Let ${\ds G_{g}= S^{p}}$. Then $F_{0} = R^{p}$.  By \cite[Proposition 3.3.18]{BH}, $\omega_{R}$ is an ideal if and only if $\omega_{R}$ has rank $1$.  By \cite[Proposition 1.4.3]{BH}, this is equivalent to $\rank(\varphi) = p-1$. By \cite[Proposition 1.4.11]{BH}, it is equivalent to ${\ds \h(I_{p-1}(\varphi) )\geq 1}$ and $I_{p}(\varphi)=0$.
\end{proof}

\begin{Example}\label{ddegexample}{\rm
Let $S$ be a regular local ring with the maximal ideal $(X, Y, Z)$. Let $R=S/I$ be given by the minimal resolution
\[ 0 \lar S^{2} \stackrel{\sigma}{\lar} S^{3} \lar S \lar R \lar 0, \; \mbox{where} \;  \sigma= \left[ \begin{array}{cc}
X^{2} & Y^{2} \\ YZ & ZX \\ XZ & XY 
\end{array}\right]. \]
Let $x, y, z$ be the images of $X, Y, Z$ in $R=S/I$ respectively. As shown in the proof of Proposition~\ref{bidegcomp}, we obtain the free presentation of the canonical module $\omega_{R}$ of $R$:
\[ R^{3} \stackrel{\varphi}{\lar} R^{2} \rar \omega_{R} \rar 0,\]
where ${\ds \varphi = \sigma^{T} \otimes R = \left[\begin{array}{ccc} x^{2} & yz & xz \\ y^{2} & zx & xy  \end{array}  \right] }$.  Since  ${\ds \h(I_{1}(\varphi) ) =0}$, the canonical module $\omega_{R}$ is not an ideal.  Moreover, we have
\[ 0 \rar \omega_{R}^{*} \rar R^{2} \stackrel{\varphi^{*}}{\lar} R^{3} \rar D(\omega_{R}) \rar 0,\]
where ${\ds \varphi^{*} = \sigma \otimes R =  \left[ \begin{array}{cc} x^{2} & y^{2} \\ yz & zx \\ xz & xy 
\end{array}\right]}$.  By using Macaulay2, we compute the bi-canonical degree of $R$:
\[ \ddeg(R) = \deg( \Ext_{R}^1(D(\omega_{R}), R) ) + \deg( \Ext_{R}^2(D(\omega_{R}), R) ) = 1+ 6 = 7. \qedhere \]
}\end{Example}

\bigskip

In \cite{blue1} and \cite{bideg} we studied when minimal values of the degrees are attained. Recall (Proposition~\ref{bidegag}) that if $R$ is an almost Gorenstein ring of dimension $1$, then $\ddeg(R)=1$. The following Remark shows that almost Gorenstein rings possess canonical ideals. 

\begin{Remark}{\rm 
Let $(R, \m)$ be a Cohen-Macaulay ring with a canonical module $\omega$. If there exists an injective homomorphism $\varphi: R \rar \omega$, then $\omega$ is isomorphic to an ideal. In particular, if $R$ is almost Gorenstein then the canonical module is isomorphic to an ideal. 
}\end{Remark}

\begin{proof} Consider the exact sequence ${\ds 0 \rar R \stackrel{\varphi}{\lar} \omega \rar C \rar 0}$, where $C=\coker(\varphi)$. If $C=0$, then $R \simeq \omega$ and we are done. Suppose $C \neq 0$.  Let ${\ds \p}$ be a minimal prime of $R$. Then ${\ds \omega_{\p} \simeq \omega_{R_{\p}} \simeq E(k(\p))}$, where 
${\ds \omega_{R_{\p}} }$ is the canonical module of $R_{\p}$ and $E(k(\p))$ is the injective hull of the residue field of $R_{\p}$. 
Hence by Matlis Duality \cite[Proposition 3.2.12]{BH}, we have ${\ds \l(R_{\p}) = \l(\omega_{\p})}$. Therefore $C_{\p} =0$. This means $R_{\p} \simeq \omega_{R_{\p}}$ for every minimal prime, that is, $R$ is generically Gorenstein. By \cite[Proposition 3.3.18]{BH}, $\omega$ can be identified with an ideal.
\end{proof}

It would be interesting to examine whether Proposition~\ref{bidegag} holds when the canonical module is not necessarily an ideal.

\begin{Question}\label{Gotoring}{\rm 
Let $R$ be a Cohen--Macaulay local ring of dimension $1$ with a canonical module $\omega$. When is $\ddeg(R)=1$?
}\end{Question}

Let ${\ds \Gamma: \omega \otimes \omega^{*} \rar  R }$ be the natural map given by ${\ds \Gamma(x \otimes \alpha) = \alpha(x)}$. The {\em trace}  of $\omega$ is the image of $\Gamma$ in $R$ and is denoted by ${\ds \tr(\omega)}$. The trace degree $\tdeg(R)$ of $R$ was introduced in \cite{HHS19}.
  When the canonical module is isomorphic to an ideal, a standard metric is $\tdeg(R) =\deg(R/\tr(\omega))$.
 In dimension $1$,  $\ddeg(R)$ equals $\tdeg(R)$ \cite[Proposition 2.3]{bideg}.
 A generalization is given by the following proposition.

\begin{Proposition}\label{trace} Let $R$ be a $1$--dimensional Cohen-Macaulay local ring with a canonical module $\omega$. 
\begin{enumerate}[{\rm (1)}]
\item If $\omega$ is torsionless, then  ${\ds \ddeg(R) = \lambda(R/\tr(\omega))}$.
\item If $\omega$ is not torsionless, then  ${\ds \ddeg(R) = \deg \big( \Hom(R/\tr(\omega), \omega) \big) }$. 
\end{enumerate}
\end{Proposition}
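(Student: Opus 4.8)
The plan is to analyze the four-term exact sequence
\[ 0 \lar \Ext_{R}^1(D(\omega), R) \lar \omega  \lar  \omega^{**} \lar \Ext_{R}^2(D(\omega), R) \lar 0 \]
from \cite[Proposition 2.6]{AusBr} case by case, according to whether $\omega$ is torsionless, and to identify the outer Ext terms with trace-theoretic objects. Recall that torsionlessness means exactly that the map $\omega \to \omega^{**}$ is injective, i.e.\ $\Ext_{R}^1(D(\omega),R)=0$; so in case (1) the sequence collapses to $0 \to \omega \to \omega^{**} \to \Ext_{R}^2(D(\omega),R)\to 0$, whence $\ddeg(R)=\deg(\Ext_{R}^2(D(\omega),R))=\deg(\omega^{**}/\omega)$. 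By Proposition~\ref{torsionless}, $\omega$ is then isomorphic to an ideal $\C$, and $\deg(\C^{**}/\C)$ is the bi-canonical degree in the classical sense of Definition~\ref{bideg}; by the cited \cite[Proposition 2.3]{bideg} this equals $\lambda(R/\tr(\omega))$ in dimension $1$. Alternatively I would reprove the identity $\omega^{**}/\omega \cong R/\tr(\omega)$ directly: for a canonical ideal $\C$ in a $1$-dimensional Cohen-Macaulay ring one has $\C^{**} = R :_Q (R :_Q \C)$, and a trace computation identifies $\C :_Q \C$ with $\End(\C) = R$ (since $\C$ is a canonical ideal), giving $\tr(\C) = \C \cdot (R:_Q\C)$ and then $R/\tr(\C) \cong \C^{**}/\C$ after dualizing into $\C$; I would cite \cite[Proposition 2.3]{bideg} rather than redo this.

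For case (2), $\omega$ is not torsionless, so $\Ext_{R}^1(D(\omega),R)\neq 0$ in general and both terms contribute: $\ddeg(R) = \deg(\Ext_{R}^1(D(\omega),R)) + \deg(\Ext_{R}^2(D(\omega),R))$. The key idea is to compare $\omega$ with $\tr(\omega)$ directly rather than via $\omega^{**}$. Since $\dim R = 1$, let $Q$ be the total ring of fractions; the canonical module has rank $1$ exactly on the Gorenstein minimal primes, and over the non-Gorenstein minimal primes $\omega_\p$ is not free, which is why $\omega$ fails to embed in a free module. The natural evaluation map $\Gamma: \omega \otimes \omega^{*} \to R$ has image $\tr(\omega)$; consider the map $\omega^{*} \to \Hom(\omega, R)$... more usefully, I would use the canonical identification $\Hom(\omega,\omega) = R$ (true for a canonical module over a Cohen-Macaulay local ring) and the fact that applying $\Hom(-,\omega)$ is a duality on the category of maximal Cohen-Macaulay modules. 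Applying $\Hom(-,\omega)$ to the inclusion $\tr(\omega) \hookrightarrow R$ gives an exact sequence whose cokernel measures $\deg(R/\tr(\omega))$; but since we want $\deg\big(\Hom(R/\tr(\omega),\omega)\big)$ and $R/\tr(\omega)$ has finite length, Matlis-type duality with respect to $\omega$ preserves length, so $\deg\big(\Hom(R/\tr(\omega),\omega)\big) = \lambda\big(\Hom(R/\tr(\omega),\omega)\big) = \lambda(R/\tr(\omega))$ when $\tr(\omega)$ is $\m$-primary. The real content is therefore to show $\ddeg(R) = \lambda(R/\tr(\omega))$ in the non-torsionless case too, and that the stated formula with $\Hom(R/\tr(\omega),\omega)$ is the correct uniform expression — presumably because when $\omega$ is torsionless $\Hom(R/\tr(\omega),\omega) \cong R/\tr(\omega)$ (both having the same length), recovering (1), while in general $\Hom(R/\tr(\omega),\omega)$ need not be $R/\tr(\omega)$ if $\tr(\omega)$ is not $\m$-primary.

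The main obstacle I anticipate is the precise homological identification of $\Ext_{R}^1(D(\omega),R) \oplus \Ext_{R}^2(D(\omega),R)$ — or rather the two-term filtration of which these are the pieces — with $\Hom(R/\tr(\omega),\omega)$. The clean route is: (i) show that $\tr(\omega) = \Gamma(\omega \otimes \omega^*)$ fits into $0 \to \tr(\omega) \to R \to R/\tr(\omega) \to 0$ with $R/\tr(\omega)$ of finite length (using that $R$ is generically Gorenstein away from finitely many height-one — here, minimal — primes, so $\tr(\omega)_\p = R_\p$ for $\p$ not maximal); (ii) apply the functor $\Hom(-,\omega)$; (iii) use $\Hom(R,\omega)=\omega$, $\Hom(\tr(\omega),\omega) \cong \omega^{**}$ (this step, relating the $\omega$-dual of the trace to the bidual of $\omega$, is the crux and should follow from $\Hom(\omega,\omega)=R$ together with the surjection $\omega \otimes \omega^* \to \tr(\omega)$), and $\Ext^1(R,\omega)=0$; (iv) read off the long exact sequence to get $0 \to \Hom(R/\tr(\omega),\omega) \to \omega \to \omega^{**} \to \Ext^1(R/\tr(\omega),\omega) \to 0$, and match this term by term against the Auslander sequence to conclude $\Ext^1(D(\omega),R) \cong \Hom(R/\tr(\omega),\omega)$ and $\Ext^2(D(\omega),R) \cong \Ext^1_R(R/\tr(\omega),\omega)$; finally, since $R/\tr(\omega)$ has finite length and $\omega$ is a canonical module, local duality gives $\deg\Ext^1_R(R/\tr(\omega),\omega) = \lambda(\Ext^1_R(R/\tr(\omega),\omega))$, but in fact the cleanest conclusion just reads $\ddeg(R) = \lambda(\Hom(R/\tr(\omega),\omega)) + \lambda(\Ext^1_R(R/\tr(\omega),\omega))$, and one checks this collapses to $\deg(\Hom(R/\tr(\omega),\omega))$ as claimed. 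I would be careful about whether $\tr(\omega)$ is genuinely $\m$-primary (equivalently $R$ Gorenstein on the punctured spectrum away from $\m$, automatic here since $\dim R = 1$) — if not, the length statements become degree statements, which is exactly why the proposition is phrased with $\deg$ and $\Hom(R/\tr(\omega),\omega)$ rather than plain lengths.
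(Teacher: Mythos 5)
Your overall route is the same as the paper's: part (1) is reduced to the ideal case via Proposition~\ref{torsionless} and \cite[Proposition 2.3]{bideg}, and for part (2) you dualize $0 \to \tr(\omega) \to R \to R/\tr(\omega) \to 0$ into $\omega$, identify $\Hom(\tr(\omega),\omega) \cong \omega^{**}$ through the surjection $\omega \otimes \omega^{*} \to \tr(\omega)$ (whose kernel is torsion, hence killed by $\Hom(-,\omega)$) and the adjunction $\Hom(\omega^{*}\otimes\omega,\omega)\cong\Hom(\omega^{*},\Hom(\omega,\omega))\cong\omega^{**}$, and then compare the resulting four-term sequence
\[ 0 \rar \Hom(R/\tr(\omega),\omega) \rar \omega \rar \omega^{**} \rar \Ext^{1}(R/\tr(\omega),\omega) \rar 0 \]
with the Auslander--Bridger sequence. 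This is exactly the skeleton of the paper's argument, and the step you correctly flag as the crux ($\Hom(\tr(\omega),\omega)\cong\omega^{**}$) is carried out there precisely as you predict.

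There are, however, two genuine problems. First, the final step is missing: you never establish why the term $\Ext^{1}(R/\tr(\omega),\omega)$ contributes nothing, so your argument stops at $\ddeg(R)=\deg(\Hom(R/\tr(\omega),\omega))+\deg(\Ext^{1}(R/\tr(\omega),\omega))$ with only the remark that "one checks this collapses." The paper's point here is that since $\omega$ is not isomorphic to an ideal, $\h(\tr(\omega))=0$ by \cite[Lemma 2.1]{HHS19}, and this is what is used to dispose of the $\Ext^{1}$ term; without some such input the claimed formula is not proved. Second, and more seriously, several of your supporting assertions in case (2) are false and point in the wrong direction: you claim $R/\tr(\omega)$ has finite length because "$R$ is generically Gorenstein away from \dots minimal primes." But in case (2) the hypothesis is precisely that $\omega$ is \emph{not} torsionless, which by Proposition~\ref{torsionless} means $\omega$ is not an ideal and $R$ is \emph{not} generically Gorenstein; then $\tr(\omega)$ has height $0$ and $R/\tr(\omega)$ has dimension $1$. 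Indeed, if $\tr(\omega)$ were $\m$-primary then $\Hom(R/\tr(\omega),\omega)=(0:_{\omega}\tr(\omega))$ would vanish ($\omega$ is maximal Cohen--Macaulay of positive depth), and the stated formula would be vacuous; the whole content of (2) is that $\tr(\omega)$ is \emph{not} $\m$-primary, so $\Hom(R/\tr(\omega),\omega)$ is a one-dimensional module with a genuine multiplicity. Likewise your reduction "$\deg(\Hom(R/\tr(\omega),\omega))=\lambda(R/\tr(\omega))$, so the real content is to show $\ddeg(R)=\lambda(R/\tr(\omega))$" is not correct in this case. You do hedge about this at the very end, but the hedge is not resolved, and resolving it is exactly where the citation of \cite[Lemma 2.1]{HHS19} is needed.
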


\begin{proof} (1) If $\omega$ is torsionless, then $\omega$ is isomorphic to an ideal and the assertion was proved in \cite[Proposition 2.3]{bideg}.

\medskip

\noindent (2)  Let ${\ds \Gamma: \omega \otimes \omega^{*} \rar  R }$ be the natural map and let ${\ds K= \ker(\Gamma)}$.  Consider the exact sequence
\[ 0 \rar K \rar \omega \otimes \omega^{*} \rar \tr(\omega) \rar 0 \]
Set ${\ds E^{\vee} = \Hom(E, \omega)}$. Dualizing the above sequence with respect to $\omega$ we obtain
\[ 0 \rar \big( \tr(\omega) \big)^{\vee} \rar  \big( \omega \otimes \omega^{*}  \big)^{\vee} \rar K^{\vee} =0.\]
Therefore,
\[ \big( \tr(\omega) \big)^{\vee} \simeq \Hom( \omega^{*}\otimes \omega, \omega) \simeq \Hom( \omega^{*}, \Hom(\omega, \omega) )  \simeq \Hom( \omega^{*}, R) = \omega^{**}. \]
By dualizing the sequence ${\ds 0 \rar \tr(\omega) \rar R \rar R/\tr(\omega) \rar 0}$ with respect to $\omega$, we obtain the following.
\[ 0 \rar \big(R/\tr(\omega) \big)^{\vee} \rar \omega \rar \big( \tr(\omega) \big)^{\vee} \simeq \omega^{**}  \rar \Ext^{1}( R/ \tr(\omega), \omega) \rar 0. \]
Therefore, we have
\[ \ddeg(R) = \deg \big( (R/\tr(\omega))^{\vee} \big) + \deg \big( \Ext^{1}( R/ \tr(\omega), \omega)  \big). \]
Since $\omega$ is not isomorphic to an ideal, the height of $\tr(\omega)$ is $0$ \cite[Lemma 2.1]{HHS19}. Therefore, ${\ds \Ext^{1}( R/ \tr(\omega), \omega)  =0}$ and  ${\ds \ddeg(R) = \deg \big( (R/\tr(\omega))^{\vee} \big)  }$.
\end{proof}

We conclude with a question motivated by Proposition~\ref{bidegaug}.

\begin{Question}{\rm Let $(R, \m)$ be a Cohen-Macaulay local ring with a canonical module $\omega_{R}$. Then the canonical module of the idealization $A=R  \ltimes \m$ is ${\ds \omega_{A} = (\omega_{R} : \m) \times \omega_{R}}$. Suppose that $\omega_{R}$ is not isomorphic to an ideal. Can the Auslander dual $D(\omega_{A})$ of $\omega_{A}$  be written in terms of the Auslander dual $D(\omega_{R})$ of $\omega_{R}$? Is it true that ${\ds \ddeg(R\ltimes \m) = 2\, \ddeg (R) -1}$?
}\end{Question}

\medskip

\section{Precanonical Ideals}\label{precanonical}

In this section we consider a Cohen-Macaulay local ring $R$ that does not necessarily have a canonical module. We look for rings that have ideals with properties similar to those of a canonical ideal. We start by recalling some useful definitions.

\smallskip

An ideal $I$ is said to be {\em closed} if $\Hom_{R}(I, I)=R$. An $R$--module $E$ is said to be {\em semidualizing} (or {\em spherical}) if ${\ds \Hom_{R}(E, E)= R}$ and ${\ds \Ext^{i}_{R}(E, E)=0}$ for all $i \geq 1$ (\cite{V74}, \cite{CSW1}). We refer to \cite{SWsemidualizing} for a survey of semidualizing modules. In particular, the canonical module is semidualizing.

\smallskip

If $R$ is not Gorenstein, principal ideals generated by a non-zero divisor are semidualizing. Examples of rings with non trivial semidualizing ideals are constructed in \cite{SW-divisorclassgroup}.

\begin{Definition}\label{precanonicalideal}{\rm
Let $(R, \m)$ be a Cohen--Macaulay local ring of dimension $d >0$. An $R$--ideal $\P$ is called a {\em precanonical} ideal if it is closed and $\Ext_{R}^{1}(\P, \P)=0$. 
}\end{Definition}

For instance, canonical ideals and semidualizing ideals are precanonical ideals. The following result shows when a precanonical ideal is canonical.

 \begin{Theorem}\label{Ulrich}
Let $(R, \m)$ be a $1$-dimensional Cohen-Macaulay local ring that has a canonical ideal.
Let $I$ be an $\m$--primary precanonical ideal of $R$. 
If $c \in I$ is such that $I/(c)\simeq (R/\m)^n$, for some $n$, then $I$ is a canonical ideal.
\end{Theorem}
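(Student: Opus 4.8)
The plan is to exploit the characterization of the canonical ideal among precanonical ideals via a counting/length argument together with the fact that $R$ has a genuine canonical ideal $\C$ to compare against. Since $I$ is $\m$-primary and $c \in I$ with $I/(c) \simeq (R/\m)^n$, the element $c$ is a minimal reduction of $I$ and $\lambda(I/(c)) = n = \mu(I) - 1$ is essentially the ``type-like'' data attached to $I$; in fact the hypothesis $I/(c) \cong k^n$ says that $\m I \subseteq (c)$, i.e. $I$ behaves like an Ulrich-type module with respect to $(c)$. The first step is to record the consequences of $\Hom_R(I,I) = R$ and $\Ext^1_R(I,I) = 0$: writing $A = \End_R(I)$, closedness gives $A = R$, so $I$ is an ideal that is ``its own endomorphism ring'', and combined with $\Ext^1_R(I,I)=0$ this is exactly the statement that $I$ is a semidualizing ideal of the $1$-dimensional Cohen--Macaulay ring $R$.

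Next I would invoke the structure theory of semidualizing ideals in dimension one over a ring admitting a canonical ideal. The key point is that a semidualizing module $I$ over a Cohen--Macaulay local ring with canonical module $\C$ sits in a ``two-step'' relationship: $\Hom_R(I,\C)$ is again semidualizing and $I \otimes_R \Hom_R(I,\C) \to \C$ is an isomorphism; moreover $I$ is canonical precisely when $\Hom_R(I,\C)$ is free, i.e. $\cong R$. So the strategy is: show that the Ulrich hypothesis $I/(c) \cong k^n$ forces $\Hom_R(I,\C) \cong R$, equivalently forces the ``complementary'' semidualizing ideal to be trivial. To do this I would dualize the minimal reduction sequence $0 \to (c) \to I \to k^n \to 0$ into $\C$ (after reducing mod $c$, so working over the Artinian ring $R/(c)$, whose injective hull of $k$ is the canonical module of $R/(c)$, namely $\C/(c)$ or its reduction), and use Matlis duality over $R/(c)$ to convert length equalities into the statement that the dual module is cyclic.

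The cleanest route is probably a direct length count. Over $\bar R = R/(c)$, which is Artinian Gorenstein-in-codim-$0$... no — Artinian, with canonical module $\bar\C$, set $(-)^\vee = \Hom_{\bar R}(-, \bar\C)$, Matlis duality. From $\bar I := I/(c) \cong k^n$ we get, applying $(-)^\vee$ to suitable sequences, that the relevant dual is computed from $k^n$; meanwhile semidualizingness of $I$ over $R$ descends, under the regular element $c$, to semidualizingness of $\bar I$ over $\bar R$, and a semidualizing module over an Artinian local ring that is a $k$-vector space must be $k$ itself with $\bar R$ Gorenstein — so $n = 1$, $\bar R$ is Gorenstein, hence $R$ is Gorenstein, and then $I \cong R$, in particular $I$ is (trivially) a canonical ideal; or if one does not want to conclude $n=1$ one shows directly $\mu(I) = \mu(\C)$ and $\lambda(R/I) = \lambda(R/\C)$ after normalizing $\C$ to also be $\m$-primary with the same reduction, forcing $I \cong \C$ by the rigidity of semidualizing ideals.

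The main obstacle I anticipate is the reduction step: passing the hypotheses ``$I$ closed'' and ``$\Ext^1_R(I,I)=0$'' cleanly through the quotient by the regular element $c$, and then correctly identifying the canonical module of the Artinian quotient, so that Matlis duality can be applied to turn the Ulrich condition $I/(c)\cong k^n$ into the cyclicity of the complementary semidualizing ideal. Once that translation is in place, the finish — a semidualizing module over an Artinian local ring that is a nonzero $k$-vector space is $k$, forcing Gorensteinness — is short. I would also need the standard fact (available since $R$ has a canonical ideal and is Cohen--Macaulay of dimension one) that semidualizing ideals come in complementary pairs via $\Hom_R(-,\C)$, and that $I$ is canonical iff this complement is $\cong R$; this lets the whole argument run even when $n > 1$ is not immediately excluded.
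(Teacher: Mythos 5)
There is a genuine gap, and in fact the proposed endgame is false. Your plan culminates in: ``a semidualizing module over an Artinian local ring that is a nonzero $k$-vector space is $k$, so $n=1$, $\bar R$ is Gorenstein, hence $R$ is Gorenstein.'' This cannot be right: take $R$ a non-Gorenstein almost Gorenstein ring of dimension one with canonical ideal $\C$ and minimal reduction $(a)$; then $\m\C\subseteq(a)$, so $\C/(a)\simeq k^{r-1}$ with $r=\mbox{type}(R)\geq 2$, and $\C$ is certainly precanonical. The theorem's hypotheses are satisfied with $n=r-1$, the conclusion ($I$ canonical) holds, but $R$ is not Gorenstein and $n$ need not be $1$. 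The error comes from conflating $I/(c)$ with $I/cI$: what descends to a semidualizing module over $R/(c)$ is $I\otimes_R R/(c)=I/cI$, which sits in $0\to (c)/cI\to I/cI\to I/(c)\to 0$ with $(c)/cI\simeq R/I$, and is emphatically not a $k$-vector space. Two further problems: (i) a precanonical ideal only satisfies $\Ext^1_R(I,I)=0$, not $\Ext^i_R(I,I)=0$ for all $i\geq 1$, so the semidualizing structure theory (complementary pairs under $\Hom_R(-,\C)$, the isomorphism $I\otimes_R\Hom_R(I,\C)\to\C$) that your argument leans on is not available from the hypotheses; (ii) even granting it, the alternative finish ``show $\mu(I)=\mu(\C)$ and $\lambda(R/I)=\lambda(R/\C)$, forcing $I\simeq\C$ by rigidity'' is asserted rather than carried out.

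The paper's argument is much more elementary and uses exactly the two defining conditions once each. Apply $\Hom_R(-,I)$ to $0\to(c)\to I\to k^n\to 0$: closedness gives $\Hom_R(I,I)=R$, $\Hom_R((c),I)=c^{-1}I$, and $\Ext^1_R(I,I)=0$ yields $0\to R\to c^{-1}I\to(\Ext^1_R(k,I))^n\to 0$. Hence $(\Ext^1_R(k,I))^n\simeq c^{-1}I/R\simeq I/(c)\simeq k^n$, and since $\Ext^1_R(k,I)\simeq\Hom_R(k,I/cI)$ (as $c$ is $I$-regular), one gets $\Hom_R(k,I/cI)=k$. Because $R/I\simeq(c)/cI$ embeds in $I/cI$, the socle of $R/I$ is $k$, so $I$ is irreducible; an irreducible $\m$-primary ideal in a one-dimensional Cohen--Macaulay ring possessing a canonical ideal is canonical. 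If you want to salvage your approach, the socle computation is the step your length-counting plan is missing.
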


\begin{proof} By \cite[Proposition 3.4]{bideg}, it is enough to show that $I$ is irreducible. Let $k=R/\m$.
Consider the exact sequence of natural maps
\[  0 \lar  (c ) \lar I	\lar I/(c) \simeq k^n \rar 0. \]
 Applying  $\Hom(\tratto, I)$ to the exact sequence, we obtain 
\[  0 \lar  R \lar c^{-1}I \lar  \left( \Ext^{1}_{R}(k, I) \right)^n  \lar 0. \]
Therefore we have
\[ \left( \Hom_{R}(k, I/cI) \right)^{n} \simeq  \left( \Ext^{1}_{R}(k, I) \right)^n \simeq c^{-1}I/R \simeq I/(c) \simeq k^{n}.\]
Thus, $ \Hom_{R}(k, I/cI)=k$.
Since $R/ I \simeq  (c)/cI $ embeds in $I/cI$, the socle of $R/I$ is $k$. Hence $I$ is irreducible.	
\end{proof}

\begin{Question}{\rm
Let $(R, \m)$ be a Cohen--Macaulay local ring possessing a precanonical ideal $\P$. Define the {\em bi-canonical degree} of $R$ {\em relative to} $\P$ as 
\[ \ddeg_{\P}(R) = \deg(\P^{**}/\P). \]
Which are the properties we can obtain from this metric to understand the structures of the ideal and the ring?
}\end{Question}

\medskip

Recall that if $\C$ is a canonical ideal of a $1$-dimensional Cohen-Macaulay local ring then $\ddeg(R)=\lambda(\C^{**}/\C)=0$ (i.e., $\C$ is reflexive) if and only if $R$ is Gorenstein (i.e., $\C$ is isomorphic to $R$). We now study reflexivity in more generality.

First, we prove a lemma inspired by \cite[Lemma 1.1]{HHS19} and \cite[Lemma 2.9]{KSyzygies} that has interest in its own right.

\begin{Lemma}\label{lemmatrace}
Let \(R\) be a \(1\)--dimensional Cohen-Macaulay local ring and \(I\) an ideal that contains a non--zero divisor.
Then \[\tr(I)^*\cong (I\otimes_R I^{-1})^* \cong  (I\otimes_R I^*)^*\cong \Hom_R(I,I^{**}).\]
\end{Lemma}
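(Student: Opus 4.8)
The plan is to establish the chain of isomorphisms from left to right by identifying each term with $\Hom_R(I, I^{**})$. Throughout, $I$ contains a non-zero divisor, so $I$ is a faithful rank-one ideal, $I^{-1} = R :_Q I$ makes sense inside the total ring of fractions $Q$, and $I^{-1} \cong I^* = \Hom_R(I,R)$ via the map sending $q \in I^{-1}$ to multiplication by $q$; this already gives the middle isomorphism $(I \otimes_R I^{-1})^* \cong (I \otimes_R I^*)^*$ for free, by functoriality of $(\tratto)^* = \Hom_R(\tratto, R)$ applied to $\mathrm{id}_I \otimes (I^{-1} \xrightarrow{\sim} I^*)$.

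For the right-hand isomorphism, the key identity is the adjunction (Hom-tensor adjointness):
\[
(I \otimes_R I^*)^* = \Hom_R(I \otimes_R I^*, R) \cong \Hom_R\big(I^*, \Hom_R(I, R)\big) = \Hom_R(I^*, I^*) \overset{?}{\cong} \Hom_R(I, I^{**}).
\]
To close the last step I would use the natural biduality map $\sigma_I \colon I \to I^{**}$ together with the fact that, after inverting a non-zero divisor, $I^*$ and $I^{**}$ become identified with fractional ideals of $Q$ and $\sigma_I$ becomes an equality on the $Q$-level; one then checks that $\Hom_R(I^*, I^*) = \Hom_R(I, I^{**})$ as submodules of $Q$ (or of $\End_Q(Q) = Q$), both being $I^{**} :_Q I^{**} \cap \cdots$ — more cleanly, one shows $\Hom_R(I^*, I^*) \cong \Hom_R(I^{**}, I^{**})$ since $I^{**}$ is the reflexive hull of $I^*$ in dimension one (so dualizing an iso $I^* \cong I^{-1}$-type module doesn't change the endomorphism ring), and $\Hom_R(I^{**}, I^{**}) \cong \Hom_R(I, I^{**})$ because $I^{**}/\sigma_I(I)$ is a torsion module and $I^{**}$ is torsion-free, so every $R$-homomorphism $I \to I^{**}$ extends uniquely to $I^{**}$. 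This extension/restriction argument is the technical heart and the place I expect the most care: one must use that $R$ is one-dimensional Cohen-Macaulay (so depth arguments on the torsion-free module $I^{**}$ work) and that $I^{**}$ has finite colength over $\sigma_I(I)$.

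For the leftmost isomorphism $\tr(I)^* \cong (I \otimes_R I^{-1})^*$, recall $\tr(I) = \image(\Gamma)$ where $\Gamma \colon I \otimes_R I^* \to R$; identifying $I^* \cong I^{-1}$, the map $\Gamma$ becomes the multiplication map $I \otimes_R I^{-1} \to R$ whose image is the product ideal $I \cdot I^{-1} = \tr(I)$. Let $K = \ker\Gamma$. Then $0 \to K \to I \otimes_R I^{-1} \to \tr(I) \to 0$, and applying $\Hom_R(\tratto, R)$ I would argue $K^* = 0$: indeed $K$ is the kernel of a surjection of rank-one torsion-free-ish modules onto a full-rank ideal $\tr(I)$ (note $\tr(I)$ contains a non-zero divisor since $I$ does), so $K$ is torsion, hence $\Hom_R(K, R) = 0$ as $R$ has positive depth. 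This yields $\tr(I)^* \cong (I \otimes_R I^{-1})^*$ from the long exact sequence. Assembling the three isomorphisms completes the proof; the main obstacle, as noted, is the careful dimension-one torsion-free bookkeeping in identifying $\Hom_R(I^*, I^*)$ with $\Hom_R(I, I^{**})$.
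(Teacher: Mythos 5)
Your handling of the first two isomorphisms matches the paper's argument: the identification $I^{-1}\cong I^*$ gives the middle isomorphism, and for $\tr(I)^*\cong (I\otimes_R I^*)^*$ the paper does exactly what you do --- localize the evaluation sequence at the total ring of fractions to see that the kernel $K$ is torsion, hence $\Hom_R(K,R)=0$, and dualize. That part of your proposal is fine.

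The third isomorphism is where you go astray, and the irony is that it is the easiest step. The paper's (implicit) route is to apply Hom--tensor adjointness by pulling out the \emph{first} tensor factor:
\[
(I\otimes_R I^*)^*=\Hom_R(I\otimes_R I^*,R)\cong \Hom_R\bigl(I,\Hom_R(I^*,R)\bigr)=\Hom_R(I,I^{**}),
\]
which finishes the proof in one line with no hypotheses on $R$ beyond those already used. You instead pulled out the second factor, landed on $\Hom_R(I^*,I^*)$, and then needed a bridge back to $\Hom_R(I,I^{**})$. The target identity is true (both sides are naturally $(I\otimes_R I^*)^*$ by the two adjunctions), but the bridge you sketch has genuine defects: (a) $I^{**}$ is the reflexive hull of $I$, not of $I^*$ --- the module $I^*$ is already its own bidual since $I^{***}\cong I^*$; and (b) the claim that every homomorphism $I\to I^{**}$ extends to $I^{**}$ is precisely the surjectivity of the restriction map $\Hom_R(I^{**},I^{**})\to \Hom_R(I,I^{**})$, which torsion-freeness of $I^{**}$ does \emph{not} give (it gives injectivity). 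Writing both sides as fractional ideals, surjectivity amounts to $I^{**}:_{Q}I=I^{**}:_{Q}I^{**}$, i.e.\ $R:\tr(I)=R:\tr(I^*)$, which is not justified by anything you say and is essentially as hard as the statement you are trying to prove. Replace the whole detour with the displayed adjunction and the proof is complete.
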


\begin{proof}
By \cite[Lemma 1.1]{HHS19}, one has \(\tr(I)=I\cdot I^{-1}.\)
Let \(K\) be the kernel of the evaluation map \( I\otimes_R I^* \longrightarrow \tr(I),\) and consider the exact sequence 
\[0\longrightarrow K\longrightarrow I\otimes_R I^*\longrightarrow \tr(I)\longrightarrow 0.\]

Let \(Q(R)\) be the total ring of fractions of \(R\), which is a flat \(R\)-module.
Since \(I\) contains a non--zero divisor, the map \[ev\otimes 1\colon (I\otimes_R I^*)\otimes_R Q(R)\longrightarrow \tr(I)\otimes_R Q(R)\]
is an isomorphism. Therefore, \(K\otimes_R Q(R)=0.\)

As \(K\) is a torsion \(R\)--module, \(\Hom_R(K,R)=0\), so dualizing the above exact sequence yields

\[0\longrightarrow \Hom_R(\tr(I), R))\longrightarrow \Hom_R(I\otimes_R I^*,R)\longrightarrow \Hom_R(K,R)=0\]
and the lemma follows.
\end{proof}

The following is an improvement of {\rm  \cite[Proposition 2.4]{BGHHV}}.
 
 \begin{Proposition}\label{resddeg3}
Let \(R\) be a \(1\)--dimensional Cohen-Macaulay local ring and \(I\) an ideal that contains a non-zero-divisor. If \(I\) is closed and reflexive,  then \(I\) is a principal ideal. 
\end{Proposition}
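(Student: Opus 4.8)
The plan is to combine the trace description from Lemma~\ref{lemmatrace} with the hypotheses ``closed'' and ``reflexive'' to force $\tr(I)=R$, and then deduce principality. Since $I$ is closed, $\Hom_R(I,I)=R$, and since $I$ is reflexive, $I^{**}=I$; hence $\Hom_R(I,I^{**})=\Hom_R(I,I)=R$. By Lemma~\ref{lemmatrace} this gives $\tr(I)^*\cong R$, i.e. $\Hom_R(\tr(I),R)\cong R$. Now I would argue that $\tr(I)$ contains a non-zero divisor: indeed $\tr(I)\supseteq I\cdot I^{-1}$ and $I$ contains a regular element $a$, while $a^{-1}$ (viewed appropriately, or rather the inclusion $I^{-1}\supseteq (a):I \ni 1$ after rescaling) shows $\tr(I)$ meets the non-zero-divisors — more carefully, $\tr(I)=I\cdot I^{-1}$ by \cite[Lemma 1.1]{HHS19}, and since $1\in \Hom_R(I,I)=I^{-1}I$ we get that $\tr(I)$ is not contained in any minimal prime, so being an ideal in a $1$-dimensional Cohen-Macaulay ring it contains a non-zero divisor. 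Therefore $\tr(I)$ is an ideal of height one (equivalently $\m$-primary on each local component), and the isomorphism $\Hom_R(\tr(I),R)\cong R$ combined with the inclusion $\tr(I)\hookrightarrow R$ will be used to conclude $\tr(I)=R$.

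For that last implication: dualizing $0\to \tr(I)\to R\to R/\tr(I)\to 0$ gives
\[ 0\to \Hom_R(R/\tr(I),R)\to R\to \Hom_R(\tr(I),R)\to \Ext^1_R(R/\tr(I),R)\to 0. \]
Since $\tr(I)$ contains a non-zero divisor, $R/\tr(I)$ is torsion, so $\Hom_R(R/\tr(I),R)=0$, and we get an exact sequence $0\to R\to \Hom_R(\tr(I),R)\to \Ext^1_R(R/\tr(I),R)\to 0$. Using $\Hom_R(\tr(I),R)\cong R$, the injection $R\hookrightarrow R$ it defines is multiplication by some element, and comparing with the canonical inclusion $\tr(I)\subseteq R$ (which, after identifying $\Hom_R(\tr(I),R)$, corresponds to the element $1$) shows the cokernel $\Ext^1_R(R/\tr(I),R)$ must be the quotient $R/(\text{unit})=0$; hence $R\cong\Hom_R(\tr(I),R)$ via the restriction map, which by the $1$-dimensional Cohen-Macaulay reflexivity of $R$ forces $\tr(I)=R$. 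An alternative, cleaner route to $\tr(I)=R$: $\tr(I)$ is an ideal with $\tr(I)\cdot\tr(I)=\tr(I)$ (trace ideals are idempotent) containing a non-zero divisor, and an idempotent ideal containing a regular element in a Noetherian ring equals $R$ by Nakayama applied locally; I would actually prefer to cite this idempotence directly since it is the slickest argument.

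Finally, $\tr(I)=R$ means the evaluation map $I\otimes_R I^*\to R$ is surjective, so $I$ is a projective $R$-module (a finitely generated module whose trace ideal is the unit ideal is projective); over the local ring $R$ a finitely generated projective module is free, and since $I$ contains a non-zero divisor it has rank $1$, hence $I\cong R$, i.e. $I$ is principal.

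The main obstacle I anticipate is the step establishing $\tr(I)=R$ from the data $\Hom_R(\tr(I),R)\cong R$ together with $\tr(I)$ containing a non-zero divisor: one must be careful that the isomorphism is realized by the natural restriction map and not merely an abstract isomorphism, so that the comparison with the inclusion $\tr(I)\subseteq R$ is legitimate. Invoking idempotence of trace ideals sidesteps this entirely, so if the bookkeeping with the dual gets delicate I would fall back on: $\tr(I)$ idempotent $+$ contains a non-zero divisor $\Rightarrow$ $\tr(I)=R$, which is the cleanest path and makes the projectivity conclusion immediate.
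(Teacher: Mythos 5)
Your opening and closing moves match the paper's: Lemma~\ref{lemmatrace} plus closedness and reflexivity give $\tr(I)^*\cong\Hom_R(I,I^{**})=\Hom_R(I,I)=R$, and once one knows $L=\tr(I)=I\cdot I^{-1}$ is the unit ideal (or even just free on a non-zero divisor), invertibility and hence principality of $I$ follow. The problem is the middle step, and the route you say you would ``actually prefer'' rests on a false statement: trace ideals are \emph{not} idempotent. In $R=k[[t^2,t^3]]$ one has $\tr(\m)=\m\cdot\m^{-1}=\m$ while $\m^2\neq\m$. The true statement is $\tr(\tr(I))=\tr(I)$, i.e.\ $\tr(I)\cdot\tr(I)^{-1}=\tr(I)$, equivalently $\Hom_R(\tr(I),R)=\Hom_R(\tr(I),\tr(I))$ --- which is not idempotence of $\tr(I)$. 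Note also that your preferred argument never uses the hypotheses: if ``idempotent $+$ contains a non-zero divisor $\Rightarrow$ equals $R$'' applied to every trace ideal, then every ideal containing a non-zero divisor would be invertible. That should have been a warning sign.

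Your fallback route is incomplete for exactly the reason you flag: an abstract isomorphism $\tr(I)^*\cong R$ together with the natural injection $R\hookrightarrow \tr(I)^*$ does not force that injection to be onto (compare $(t)^*=t^{-1}R\cong R$ with $R\subsetneq t^{-1}R$ in a discrete valuation ring). What closes the gap is extra structure you do not invoke. Either track that every identification in Lemma~\ref{lemmatrace} and in the chain above is the natural one, so that, as fractional ideals, $\tr(I)^{-1}=I:I=R$ --- this is how the paper reads its own computation, ``realizing $L^{*}$ as a fractional ideal in the quotient ring of $R$'' --- or use that $\tr(I)^{*}=\End(\tr(I))$ is a \emph{ring} between $R$ and $Q(R)$, and a ring which is free of rank one as an $R$-module over $R$ must equal $R$. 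Even then one must still pass from $\tr(I)^{-1}=R$ to $\tr(I)=R$, e.g.\ by observing that in a one-dimensional Cohen--Macaulay local ring $\m^{-1}\supsetneq R$, so any proper ideal $L$ containing a non-zero divisor satisfies $L^{-1}\supseteq\m^{-1}\supsetneq R$. Finally, a small point: ``a finitely generated module whose trace ideal is $R$ is projective'' is false in general ($R\oplus R/\m$ is a counterexample); for an ideal containing a non-zero divisor the correct statement is that $I\cdot I^{-1}=R$ if and only if $I$ is invertible, which is what you actually need.
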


\begin{proof}
Let \({\ds L=\tr(I)=I\cdot I^{*}.}\)  
Then by Lemma~\ref{lemmatrace} and the fact that  \(I\) is closed and reflexive we have (realizing \(L^{*}\) as a fractional ideal in the quotient ring of \(R\))
\[ L^{*}=\Hom(L, R)=  \Hom(I \cdot I^{*}, R)= \Hom(I\otimes I^{*}, R)= \Hom(I, \Hom(I^{*}, R) ) = \Hom(I,I)  =  R.\]
But since \(R\) is \(1\)--dimensional, we have that the grade of \(L\) is less than or equal to one, so \(L\) is a free \(R\)--module. 
Hence \(I\) is an invertible ideal and as \(R\) is local we have that \(I\) is principal.
\end{proof}

\begin{Question}{\rm
Can we remove the hypothesis that $I$ contains a zero divisor in Proposition~\ref{resddeg3}?
}\end{Question}

\begin{Question}{\rm How does $\Ext_{R}^{1}(I,I)=0$ affect the structure of $I$?
}\end{Question}

\medskip

\begin{center}
{Acknowledgments}
\end{center}

The authors would like to thank Shiro Goto for helpful conversations about canonical modules and Keri Sather-Wagstaff for sharing her expertise on semidualizing ideals.

\medskip

\begin{center}
{Conflict of Interest Statement}
\end{center}

On behalf of all authors, the corresponding author states that there is no conflict of interest.

\medskip

\end{document}